\newtheorem{mydef}{Definition} [section]
\newtheorem{thm} [mydef]{Theorem}
\newtheorem{lemma} [mydef]{Lemma}
\newtheorem{prop} [mydef]{Proposition}
\newtheorem{remark} [mydef]{Remark}
\newtheorem{ex} [mydef]{Example}
\title{\LARGE \bf
An Augmented Observer for the Distributed Estimation Problem for LTI Systems
}
\author{Shinkyu Park and Nuno C. Martins
\thanks{This work is partially funded by NSF CPS grant No. 0931878,  ONR AppEl Center and the Multiscale Systems Center, one of six research centers funded under the Focus Center Research Program. }
\thanks{Shinkyu Park and Nuno C. Martins are with the Department of Electrical and Computer Engineering, University of Maryland College Park, College Park, MD 20742-4450, USA.
        {\tt\small \{skpark, nmartins\}@umd.edu}}%
}
\begin{document}

\maketitle
\thispagestyle{empty}
\pagestyle{empty}

\begin{abstract}
This paper studies a network of observers for a distributed estimation problem, where each observer assesses a portion of output of a given LTI system. The goal of each observer is to compute a state estimate that asymptotically converges to the state of the LTI system. We consider there is a sparsity constraint that restricts interconnections between observers.

We provide a sufficient condition for the existence of parameters for the observers which achieve the convergence of the state estimates to the state of the LTI system. In particular, this condition can be written in terms of the eigenvalues of the Laplacian matrix of the underlying communication graph and the spectral radius of the dynamic matrix of the LTI system.
\end{abstract}

\section{INTRODUCTION}
In this paper, we discuss a distributed estimation problem for a system observed by a network of $m$ LTI observers \footnote{Without loss of generality, we assume that $m > 2$.}. Consider a LTI system is given as follows with the output vector $y(k)$\footnote{In the presence of bounded noise, our result gives bounded estimation error. Since the way this can be proven is same as one presented in this paper, we omit noise terms in state space representations due to the space constraint.}.

\begin{equation} \label{state_equation}
\begin{split}
	x(k+1) = Ax(k)\\
	y(k) = C x(k)
\end{split}
\end{equation}
where 
\begin{equation*}
\begin{split}
	y(k) = \left ( y_1^T(k), \cdots, y_m^T(k) \right )^T, C = \left ( C_1^T, \cdots, C_m^T\right )^T \\
	x(k) \in \mathbb{R}^{n}, y_i(k) \in \mathbb{R}^{r_i}
\end{split} \nonumber
\end{equation*}

Each measurement $y_i(k)$ is assessed by observer $i$, and interconnected observers form a network of observers. Each observer is allowed to share its local measurement and state estimate with nearby observers (neighbors) via communication links, which is subject to communication constraints\footnote{Here we assume that the communication links are bidirectional and time-invariant. The topology of a network of observers can be represented by a graph, whose vertex set is a set of observers and edge set is a set of communication links. Henceforth, we refer to this graph as a \textit{communication graph}.}. The communication constraints introduce a sparsity pattern in the formulation of our distributed estimation problem. Henceforth, we refer to this constraint as a sparsity constraint.

Our main goal is to design a network of observers which cooperatively computes the state of the system described by \eqref{state_equation}. In other words, let $\hat{x}_i(k)$ be the state estimate by observer $i$ then our goal is to have $\lim_{k \to \infty} ||\hat{x}_i(k) - x(k)|| = 0$ for all $i \in \{1, ~ \cdots, m\}$. The main challenge in this distributed estimation problem comes from the limitations that no single observer can compute the state of the system only with its local measurement, and the exchange of information is restricted by a sparsity constraint. Thus, the classical system theory cannot be \textit{directly} applied to finding such observers.

Similar distributed estimation problem for (dynamical) linear systems has been studied with various approaches. For example, in \cite{khan2010_cdc, matei2010_cdc, delouille2006, smith2007, khan09, khan2008_tsp, olfati-saber07_ieee_cdc, carli2007_ieee_cdc, alriksson2006_mtns, bai2011_acc}, the design of a linear observer is discussed. In particular, in \cite{khan2008_tsp, olfati-saber07_ieee_cdc, carli2007_ieee_cdc, alriksson2006_mtns, bai2011_acc}, different types of consensus-based Kalman filtering are proposed, while a nonlinear approach (moving horizon estimation algorithm) \cite{farina10} is utilized in a distributed estimation problem.

Among the previous works,  Khan \textit{et. al.} \cite{khan2010_cdc} proposed a consensus-step estimator for distributed state estimation, and other notions to provide sufficient conditions for the stability of the proposed estimator. Their work focused on finding design parameters -- consensus gain and observer gain -- for the estimator, depending on a quantity called "network tracking capacity". The design procedure is simpler than ordinary observer design procedures due to the simplification in computing parameters, and distributed computation of the parameters is possible. However, from a system theoretic point of view, the proposed method works under assumptions that are stronger than ours.

Also Matei and Baras \cite{matei2010_cdc} proposed a consensus-based linear distributed estimation algorithm. Unlike the work by Khan \textit{et. al.} \cite{khan2010_cdc}, the authors assume that a consensus gain is given, and they concentrate on finding sub-optimal observer parameters in a closed form. Also sufficient conditions for the stability of the observer are presented. Since the choice of the consensus gain reflects the characteristics of an underlying communication graph, this work does not consider the effect of the underlying graph on their distributed estimation problem.

In this paper, we consider, given a sparsity constraint, the design of a network of \textit{augmented} observers for the distributed estimation problem. A key contribution of this work is to provide a sufficient condition for the existence of augmented observers, where the state estimate by each observer asymptotically converges to the state of the LTI system. First we investigate under what condition the underlying communication graph is capable of estimating the state of the LTI system in terms of eigenvalues of the underlying graph and the spectral radius of the dynamic matrix of the LTI system. Under this condition, we prove the existence of a network of augmented observers that fulfills our objective.

%

\textbf{The following notation is adopted:}
\begin{itemize}
	\item Let observer $i$ and observer $j$ be neighbors if there exists a communication link between the two observers. Then a communication graph, subject to a sparsity constraint, can be described by the Laplacian matrix:
		\begin{equation} \label{eq_network_laplacian}
		\begin{split}
			[L]_{ij} = \left\{ 
				\begin{array} {l l}
					-1 & \quad \text{if $i \neq j$}\\
					 & \quad \text{and $i$ \& $j$} \text{ are neighbors}\\ 
					-\sum_{l \neq i} [L]_{il} & \quad \text{if $i = j$} \\
					0 &  \quad \text{otherwise} \\
				\end{array} \right.
		\end{split}
		\end{equation}
		where $[L]_{ij}$ is $i,j$-th element of a matrix $L$. Notice that $L$ is a $m \times m$ symmetric matrix; hence it has $m$ real eigenvalues. An eigenvalue of $L$ and its corresponding eigenvector are denoted by $\lambda_i$ and $v_i$, respectively. Without loss of generality, we assume that $\lambda_1 \leq \lambda_2 \leq \cdots \leq \lambda_m$.
	
	\item (Pattern Operator) \cite{sabau2010_allerton} \label{def_pattern}
	$\mathcal{P}(L) \in \{0, 1\}^{m \times m}$ to be the binary matrix
	\begin{equation}
		[\mathcal{P}(L)]_{ij} \overset{def}{=} \left \{
						\begin{array} {l l}
							0 & \quad \text{if the block}~ [L]_{ij} = 0 \\
							1 & \quad \text{otherwise}
						\end{array} 
					\right.
	\end{equation}
	We define $\mathcal{P}(L_1) \leq \mathcal{P}(L_2)$ if $[\mathcal{P}(L_1)]_{ij} \leq [\mathcal{P}(L_2)]_{ij}$ for all $i,j \in \{1, ~ \cdots, ~ m\}$.

	\item $I_n$ is a $n \times n$ identity matrix, $e_i$ is $i$-th column of $I_m$, and $\mathbf{1}$ is a vector with each element taking value one.
	
	\item The set of all unstable eigenvalues of $V$ is denoted as $\Lambda_{U}(V) \overset{def}{=} \{\lambda : |\lambda| \geq 1, det(V-\lambda I_n)=0 \}$.
	
	\item For an eigenvalue, $\lambda$, of a matrix $V$, the algebraic and geometric multiplicities are denoted by $a_{V}(\lambda)$ and $g_{V}(\lambda)$, respectively. For notational convenience, we sometimes denote an eigenvector, $v$, of $V$ corresponding to $\lambda \in sp(V)$ as $v \in Null(V - \lambda I)$.
\end{itemize}

The paper is organized as follows. In Section \ref{section_problem}, we describe the structure of an augmented observer and associated error dynamics. In Section \ref{section_main}, which presents the main result of our work, we focus on proving the existence of a network of observers, where the state estimate of each observer asymptotically converges to the state of the LTI system. Finally, discussions and future directions are presented.

\section{Problem Formulation} \label{section_problem}
In this section, we introduce the structure of our augmented observer used throughout this paper. First, we recall that given parameters to the design of a network of observers are $A$, $\{C_i\}_{i \in \{1, \cdots, m\}}$, and $\{\mathcal{N}_i\}_{i \in \{1, \cdots, m\}}$, where $\mathcal{N}_i$ is a set of neighbors of observer $i$ including observer $i$ itself.

This observer is called \textit{augmented} because the dimension of its state is larger than that of the LTI system. Thus the dimension of the augmented observer is no smaller than that of the corresponding Luenberger observer. As it is explained in the following section, with the augmented observers we are able to adopt the schemes used in the design of a dynamic compensator for LTI systems \cite{wang1973_tac, davison1990_tac}. Considering this point, we present the augmented observer as follows.
\begin{equation} \label{eq_augmented_estimator}
\begin{split}
	\hat{x}_i (k+1) &= \sum_{j \in \mathcal{N}_i} \left[ \mathbf{w}_{ij} A \underbrace{\hat{x}_j (k)}_\text{state estimate} + \mathbf{H}_{ij} \underbrace{ \left ( y_j (k) - C_{j} \hat{x}_j (k) \right )}_\text{measurement residual} \right. \\ & \quad \quad \left. + \mathbf{S}_{ij} \underbrace{z_j (k)}_\text{augmented state} \right] \\
	z_i(k+1) &= \sum_{j \in \mathcal{N}_i} \left[ \mathbf{R}_{ij} z_j (k) + \mathbf{Q}_{ij} \left ( y_j(k) - C_j \hat{x}_j (k) \right ) \right]
\end{split}
\end{equation}
where $\mathbf{H}_{ij} \in \mathbb{R}^{n \times r_j}$, $\mathbf{S}_{ij} \in \mathbb{R}^{n \times \mu_j}$, $\mathbf{Q}_{ij} \in \mathbb{R}^{\mu_i \times r_j}$, and $\mathbf{R}_{ij} \in \mathbb{R}^{\mu_i \times \mu_j}$, and $\mu_i$ is the dimension of the augmented state $z_i$. We refer $\mathbf{W}$ to a weight matrix, where $i,j$-th element of $\mathbf{W}$ is defined as $[\mathbf{W}]_{ij} = \mathbf{w}_{ij}$, and $\mathbf{H}_{ij}$, $\mathbf{S}_{ij}$, $\mathbf{Q}_{ij}$, and $\mathbf{R}_{ij}$ to gain parameters. In this work, we focus on finding the weight matrix and gain parameters such that $||\hat{x}_i(k) - x(k)|| \rightarrow 0$ as $k \rightarrow \infty$ for all $i \in \{1, \cdots, m\}$.

that achieve our objective while satisfying a given sparsity constraint.

\begin{figure} [t]
	\centering
	\includegraphics[width=0.5\textwidth]{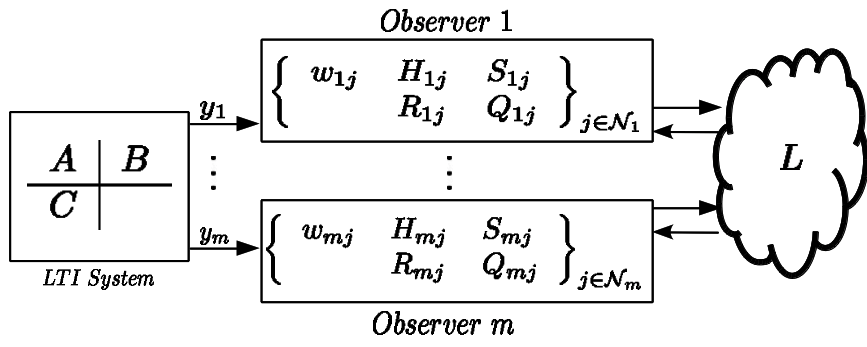}
	\caption {A framework for distributed estimation}
	\label{figure_framework}
\end{figure}

Suppose, for each $i \in \{1, \cdots, m\}$, $\sum_{j \in \mathcal{N}_i} \mathbf{w}_{ij} = 1$, then the error dynamics of \eqref{eq_augmented_estimator} can be written as
\begin{equation} \label{eq_error_dynamics}
\begin{split}
	\epsilon_i (k+1) &= \sum_{j \in \mathcal{N}_i} [ \left (\mathbf{w}_{ij} A - \mathbf{H}_{ij} C_{j} \right ) \epsilon_j(k) - \mathbf{S}_{ij} z_j (k) ] \\
	z_i(k+1) &= \sum_{j \in \mathcal{N}_i} \left [ \mathbf{R}_{ij} z_j (k) + \mathbf{Q}_{ij} C_j \epsilon_j (k) \right ]
\end{split}
\end{equation}
where $\epsilon_i(k) \overset{def}{=}  x(k) - \hat{x}_i(k)$. Throughout this paper, we assume that $\mathbf{w}_{ij} = \mathbf{w}_{ji}$ for all $i,j \in \{1, \cdots, m\}$.

\begin{remark}
	Notice that the error dynamics \eqref{eq_error_dynamics} is similar to the state space representation treated in the decentralized control problem \cite{wang1973_tac, davison1990_tac}. Hence, once the weight matrix is found and the existence condition for a network of observers is satisfied then the gain parameters can be computed by the result in \cite{wang1973_tac, davison1990_tac}.
\end{remark}

In this work, we are interested in the design of augmented observers under the choice of $\mathbf{W} = I_m - \alpha L$, where $\alpha > 0$ and $L$ is the Laplacian matrix of an underlying communication graph. It is beneficial to derive $\mathbf{W}$ from $L$ since $\mathbf{W}$ inherits the spectral property, i.e. eigenvalues and eigenvectors, of $L$ \footnote{One may have a better choice of $\mathbf{W}$ which, for instance, gives smaller eigenvalues by solving an optimization problem \cite{xiao2004_scl}.}.

Notice that we may collectively rewrite \eqref{eq_error_dynamics} as follows.
\begin{equation} \label{eq_error_dynamics02}
	\begin{split}
		\begin{pmatrix}
			\epsilon (k+1) \\
			z(k+1)
		\end{pmatrix} = 
		\begin{pmatrix}
			\mathbf{W} \otimes A - \mathbf{H} \bar{C} & -\mathbf{S} \\
			\mathbf{Q} \bar{C} & \mathbf{R}
		\end{pmatrix}
		\begin{pmatrix}
			\epsilon (k) \\
			z(k)
		\end{pmatrix}
	\end{split}
\end{equation}
where $\bar{C} \overset{def}{=} \left( e_1 \otimes C_1^T, \cdots, e_m \otimes C_m^T \right)^T$. Let $L$ be the Laplacian matrix, which describes the underlying communication graph, then \eqref{eq_error_dynamics02} satisfies a given sparsity constraint, i.e. $$\mathcal{P}(\mathbf{H}) \leq \mathcal{P}(L), \mathcal{P}(\mathbf{S}) \leq \mathcal{P}(L), \mathcal{P}(\mathbf{Q}) \leq \mathcal{P}(L), \mathcal{P}(\mathbf{R}) \leq \mathcal{P}(L)$$


\section{Main Result} \label{section_main}
The main theorem of this paper is presented in this section. First, we provide the main theorem without proof, then we state and explain supporting propositions and lemmas followed by the proof of the theorem. To state the theorem, we recall the following notation: let $L$ be the Laplacian matrix of a communication graph, and $\lambda_2$ and $\lambda_m$ be the second smallest and the largest eigenvalues of $L$, respectively.

\begin{thm} [Main Result] \label{thm_existence_gain}
	Suppose a detectable LTI system described by $(A, C)$ is given and it holds that
	$$\rho(A) < \frac{\lambda_m + \lambda_2}{\lambda_m - \lambda_2}$$
	where $\rho(A)$ is the spectral radius of $A$. For the weight matrix $\mathbf{W}$, choose  
	$$\mathbf{W} = I_m - \alpha L$$
	where $\alpha$ satisfies 
	$$\lambda_2^{-1} \left ( 1 - \rho^{-1}(A) \right ) < \alpha < \lambda_m^{-1} \left ( 1 + \rho^{-1}(A) \right )$$
	Then, for each $i,j \in \{1, \cdots, m\}$ and for some $\mu_j \in \mathbb{Z}_{+}$, there exist the gain parameters $\mathbf{H}_{ij} \in \mathbb{R}^{n \times r_j}$, $\mathbf{S}_{ij} \in \mathbb{R}^{n \times \mu_j}$, $\mathbf{Q}_{ij} \in \mathbb{R}^{\mu_i \times r_j}$, and $\mathbf{R}_{ij} \in \mathbb{R}^{\mu_i \times \mu_j}$ that satisfy a given sparsity constraint, i.e. 
	$$\mathcal{P}(\mathbf{H}) \leq \mathcal{P}(L), \mathcal{P}(\mathbf{S}) \leq \mathcal{P}(L), \mathcal{P}(\mathbf{Q}) \leq \mathcal{P}(L), \mathcal{P}(\mathbf{R}) \leq \mathcal{P}(L)$$
	and stabilize the error dynamics \eqref{eq_error_dynamics02}.
\end{thm}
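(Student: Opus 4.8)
The plan is to recast the collective error dynamics \eqref{eq_error_dynamics02} as a graph-structured decentralized dynamic output-feedback problem and then to invoke the decentralized stabilization theory of \cite{wang1973_tac, davison1990_tac}. Regard $\epsilon$ as the state of a plant $\epsilon(k+1)=(\mathbf{W}\otimes A)\epsilon(k)+u(k)$ with measured output $\bar{C}\epsilon(k)$, where the $i$-th input block $u_i\in\mathbb{R}^n$ and the $i$-th output block $C_i\epsilon_i$ are assigned to station $i$, and regard $(\mathbf{R},\mathbf{Q},\mathbf{H},\mathbf{S})$ as a dynamic compensator with internal state $z$ whose admissible interconnection pattern is $\mathcal{P}(\cdot)\le\mathcal{P}(L)$ (so station $i$ uses $z_j$ and $C_j\epsilon_j$ only for $j\in\mathcal{N}_i$). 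With this identification, \eqref{eq_error_dynamics02} is precisely the closed loop, and by \cite{wang1973_tac, davison1990_tac} an admissible compensator rendering it asymptotically stable exists --- and may be constructed with a finite augmented dimension $\sum_i\mu_i$ and with exactly the prescribed sparsity --- provided the triple $(\mathbf{W}\otimes A,\ I_{mn},\ \bar{C})$ has no fixed mode outside the open unit disk with respect to this information structure. The whole argument thus reduces to proving that, under the hypotheses, every such fixed mode is stable.

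First I would pin down the unstable modes of the open-loop matrix $\mathbf{W}\otimes A$. We may assume the communication graph is connected (otherwise the hypothesis forces $\rho(A)<1$, and any admissible $\alpha$ already makes $\mathbf{W}\otimes A$ Schur, so the claim is immediate with trivial gains). Then $\mathbf{W}=I_m-\alpha L$ is symmetric with eigenvalues $1$ (simple, with eigenvector $\mathbf{1}$) and $1-\alpha\lambda_i$, $i=2,\dots,m$. The interval prescribed for $\alpha$ is non-empty exactly because $\rho(A)<(\lambda_m+\lambda_2)(\lambda_m-\lambda_2)^{-1}$, and every admissible $\alpha$ yields $|1-\alpha\lambda_i|<\rho^{-1}(A)$ for all $i\ge 2$, hence $|(1-\alpha\lambda_i)\tau|<1$ for all $\tau\in sp(A)$ and all $i\ge 2$. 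Since $\mathbf{W}$ is diagonalizable, the generalized eigenspace of $\mathbf{W}\otimes A$ associated with eigenvalues of modulus $\ge 1$ can come only from the factor $1\cdot\tau$ with $\tau\in\Lambda_{U}(A)$, so it lies inside the consensus subspace $\mathcal{V}:=\mathrm{span}(\mathbf{1})\otimes\mathbb{R}^n$. On $\mathcal{V}$ the map $\mathbf{W}\otimes A$ acts as $A$ and $\bar{C}$ acts, up to a nonzero scalar, as the stacked matrix $C$; since $(A,C)$ is detectable, so is $(\mathbf{W}\otimes A,\bar{C})$.

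The decisive and hardest step is to show that no element of $\Lambda_{U}(A)$ is a decentralized fixed mode of the structured triple. The detectability just obtained excludes only "centralized" obstructions; one must also rule out the structural ones, which is delicate because the individual pairs $(A,C_i)$ are in general not detectable, so the compensator cannot be assembled node by node. The ingredients I would use are: (i) restricted to $\mathcal{V}$ the pair is $(A,C)$, so the unstable modes are collectively observable through $\bar{C}$; (ii) the station inputs $u_i$ project onto an unconstrained input along $\mathcal{V}$, so the unstable modes are collectively controllable; and (iii) connectedness of the graph, together with the coupling already present in $\mathbf{W}\otimes A$, makes the information structure irreducible, so the augmented states $z_i$ can relay the residuals $C_j\epsilon_j$ along the edges and no unstable mode stays invariant under every admissible $(\mathbf{H},\mathbf{S},\mathbf{Q},\mathbf{R})$. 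Concretely, for each candidate $\mu\in\Lambda_{U}(A)$ and each partition of the stations I would show, via the complementary-subsystem characterization of decentralized fixed modes and using connectedness together with detectability of $(A,C)$, that the corresponding complementary subsystem is controllable or observable at $\mu$, which is exactly the statement that $\mu$ is not a fixed mode. Once no unstable fixed mode remains, \cite{wang1973_tac, davison1990_tac} returns the integers $\mu_j$ and the gains $\mathbf{H}_{ij},\mathbf{S}_{ij},\mathbf{Q}_{ij},\mathbf{R}_{ij}$ with $\mathcal{P}(\cdot)\le\mathcal{P}(L)$ that stabilize \eqref{eq_error_dynamics02}. I expect step (iii) --- excluding unstable fixed modes in the presence of the sparsity constraint and of non-detectable individual channels --- to be the main obstacle.
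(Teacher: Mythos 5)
Your overall strategy---recasting \eqref{eq_error_dynamics02} as a decentralized dynamic output-feedback problem, showing that the only unstable modes of $\mathbf{W}\otimes A$ are those of $A$ supported on the consensus subspace, deducing detectability of $(\mathbf{W}\otimes A,\bar{C})$, and then invoking \cite{wang1973_tac, davison1990_tac} once unstable fixed modes are excluded---is the same route the paper takes, and your analysis of the spectrum of $\mathbf{W}\otimes A$ and of detectability is sound. However, the reduction to the cited references is not set up correctly. You assign station $i$ the input channel $e_i\otimes I_n$ and output $C_i\epsilon_i$, but then let the compensator $(\mathbf{H},\mathbf{S},\mathbf{Q},\mathbf{R})$ carry the interconnection pattern $\mathcal{P}(L)$ and appeal to Wang--Davison for ``fixed modes with respect to this information structure.'' Those references treat only block-diagonal (fully decentralized) compensators; they provide neither an existence theorem for compensators constrained to an arbitrary sparsity pattern nor a fixed-mode notion for such a structure. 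The paper sidesteps this by folding the graph sparsity into the station \emph{input} matrices, writing $\mathbf{H}\bar{C}=\sum_{i}\bar{B}_i\bar{\mathbf{H}}_i\bar{C}_i$ with $\bar{B}_i=E_{\mathcal{N}_i}\otimes I_n$, so that a genuinely block-diagonal decentralized synthesis applies and the resulting gains automatically satisfy $\mathcal{P}(\cdot)\le\mathcal{P}(L)$ (indeed $\mathbf{Q},\mathbf{R}$ come out block diagonal, cf.\ Remark \ref{remark_sparsity_constraint}). You would need either this reformulation, or to restrict a priori to block-diagonal compensators for your triple (admissible since $\mathcal{P}(I_m)\le\mathcal{P}(L)$), in which case the fixed-mode question must be re-examined for that input structure.

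More decisively, the step you yourself flag as the main obstacle---showing that no $\lambda\in\Lambda_{U}(A)$ is an unstable decentralized fixed mode, i.e.\ verifying the complementary-subsystem rank condition for \emph{every} partition of the stations---is announced but not carried out, and it is precisely the technical core of the theorem. In the paper this is Lemma \ref{lemma_mode_stabilizability}, proved in the appendix via an explicit Kalman decomposition showing that the observable part of $(A,C_i)$ is contained in the controllable and observable part of $(\mathbf{W}\otimes A,\bar{B}_i,\bar{C}_i)$ (using Lemma \ref{lemma_null_space_W}, $Null(\bar{\mathbf{W}}_i)\subseteq Null(\mathbf{1}^T)$), combined with the rank bound of Lemma \ref{lemma_fixed_mode} to conclude that the partition matrix has rank at least $nm$ at every unstable $\lambda$. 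Your items (i)--(iii) restate the goal (joint observability, joint controllability, ``irreducibility'' of the information structure) but supply no argument for an arbitrary partition; joint controllability and observability alone do not exclude decentralized fixed modes. Without that argument the appeal to \cite{wang1973_tac, davison1990_tac} does not go through, so the proposal as it stands does not establish the theorem.
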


\begin{remark}
	As is stated and proved later, the choice of $\alpha \in \left( \lambda_2^{-1} \left ( 1 - \rho^{-1}(A) \right ), \lambda_m^{-1} \left ( 1 + \rho^{-1}(A) \right ) \right)$ is allowed if and only if $\rho(A) < \frac{\lambda_m + \lambda_2}{\lambda_m - \lambda_2}$. This result only gives a sufficient condition; hence even if $\rho(A) < \frac{\lambda_m + \lambda_2}{\lambda_m - \lambda_2}$ does not hold, there may exists a network of observers.
\end{remark}

\subsection{Preliminary Results and Properties, subject to $\mathbf{W} = I_m - \alpha L$}
Our choice of $\mathbf{W} = I_m - \alpha L$ is adopted from the consensus literature (for instance, see \cite{olfati-saber2007_IEEEproceedings}). It is beneficial to use this relation since the existence condition can be represented by the Laplacian matrix of the underlying communication graph. The following proposition supports this argument.

\begin{prop} \label{prop_choosing_W}
	Suppose the Laplacian matrix $L$ is irreducible. Choose $\mathbf{W} = I_m - \alpha L$, where $\alpha > 0$. If the spectral radius of $A$, $\rho(A)$, satisfies
	$$\rho(A) < \frac{\lambda_m + \lambda_2} {\lambda_m - \lambda_2}$$
	then for
	$$\alpha \in \left ( \lambda_2^{-1} \left ( 1 - \rho^{-1}(A) \right ), \lambda_m^{-1} \left ( 1 + \rho^{-1}(A) \right ) \right )$$
	it holds that each eigenvalue of $\mathbf{W} \otimes A$ is either in $\Lambda_U(A)$ or inside the unit circle. Furthermore, the algebraic multiplicity of $\lambda \in \Lambda_U(\mathbf{W} \otimes A)$ is equal to $a_A (\lambda)$.
\end{prop}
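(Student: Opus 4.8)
The plan is to read off the spectrum of $\mathbf{W}\otimes A$ from those of $\mathbf{W}$ and $A$ and then use the prescribed range of $\alpha$ to place each eigenvalue. I would begin by recording the structural properties of $L$: being a symmetric Laplacian that is irreducible (connected communication graph), $L$ has $\lambda_1 = 0$ as a \emph{simple} eigenvalue with $\mathbf{1}\in Null(L)$, and $0 = \lambda_1 < \lambda_2 \le \cdots \le \lambda_m$. Since $\mathbf{W} = I_m - \alpha L$ is simultaneously diagonalizable with $L$, its eigenvalues are $1-\alpha\lambda_i$ with the same multiplicities; in particular $1-\alpha\lambda_1 = 1$ is a simple eigenvalue of $\mathbf{W}$, while $1-\alpha\lambda_i \ne 1$ for every $i\ge 2$.

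Next I would invoke the standard fact that the eigenvalues of $\mathbf{W}\otimes A$, counted with algebraic multiplicity, are precisely the products $(1-\alpha\lambda_i)\,\nu$ over $i\in\{1,\dots,m\}$ and $\nu\in sp(A)$. For $i=1$ these are exactly the eigenvalues $\nu$ of $A$, which split into those with $|\nu|\ge 1$ (that is, $\nu\in\Lambda_U(A)$) and those with $|\nu|<1$. Hence the first assertion reduces to showing $|(1-\alpha\lambda_i)\nu| < 1$ for every $i\ge 2$ and every $\nu\in sp(A)$, and since $|\nu|\le\rho(A)$ it suffices to prove $|1-\alpha\lambda_i| < \rho^{-1}(A)$ for all $i\in\{2,\dots,m\}$.

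The remaining step is elementary. The inequality $|1-\alpha\lambda_i| < \rho^{-1}(A)$ is equivalent to $1-\rho^{-1}(A) < \alpha\lambda_i < 1+\rho^{-1}(A)$. For $i\ge 2$ we have $\lambda_i\in[\lambda_2,\lambda_m]$ and $\alpha>0$, so all the left inequalities are implied by the one at $\lambda_i=\lambda_2$ (and are vacuous when $\rho(A)\le 1$), and all the right inequalities are implied by the one at $\lambda_i=\lambda_m$; thus the whole family holds exactly on $\lambda_2^{-1}(1-\rho^{-1}(A)) < \alpha < \lambda_m^{-1}(1+\rho^{-1}(A))$, which is the assumed range for $\alpha$. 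A one-line rearrangement shows this open interval is nonempty precisely when $\lambda_m(1-\rho^{-1}(A)) < \lambda_2(1+\rho^{-1}(A))$, i.e. when $\rho(A) < (\lambda_m+\lambda_2)/(\lambda_m-\lambda_2)$ (the constraint being vacuous if $\lambda_m=\lambda_2$), matching the hypothesis. This settles the first claim, and note that for a block $i\ge 2$ one actually gets the \emph{strict} bound $|(1-\alpha\lambda_i)\nu| < \rho^{-1}(A)\rho(A) = 1$.

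Finally, to handle the multiplicity statement I would argue as follows: if $\lambda\in\Lambda_U(\mathbf{W}\otimes A)$ then $|\lambda|\ge 1$, so by the strict bound just noted $\lambda$ cannot equal a product $(1-\alpha\lambda_i)\nu$ with $i\ge 2$; it therefore arises only from the block $i=1$, where $(1-\alpha\lambda_1)\nu=\nu=\lambda$ forces $\lambda\in\Lambda_U(A)$. Because $1$ is a simple eigenvalue of $\mathbf{W}$, the total algebraic multiplicity contributed is $1\cdot a_A(\lambda)=a_A(\lambda)$, whence $a_{\mathbf{W}\otimes A}(\lambda)=a_A(\lambda)$. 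I do not anticipate a genuine obstacle; the one point requiring care is the bookkeeping of algebraic multiplicities of a Kronecker product when eigenvalues coincide — one must verify that no unstable eigenvalue picks up a contribution from the contracting blocks $i\ge 2$, which is exactly what the strict inequality supplies. The degenerate case $\rho(A)=0$ (nilpotent $A$) can be excluded outright or checked by inspection.
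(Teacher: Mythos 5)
Your proof is correct and follows essentially the same route as the paper's: identify the eigenvalues of $\mathbf{W} \otimes A$ as the products $(1-\alpha\lambda_i)\nu$, use the prescribed range of $\alpha$ to force $|1-\alpha\lambda_i| < \rho^{-1}(A)$ for $i \geq 2$ so that only the simple eigenvalue $1-\alpha\lambda_1 = 1$ of $\mathbf{W}$ can produce unstable eigenvalues, and read off $a_{\mathbf{W}\otimes A}(\lambda) = a_A(\lambda)$ from the Kronecker-product multiplicity count. Your write-up is in fact slightly more careful than the paper's (explicit strict bounds, the multiplicity bookkeeping, and the degenerate $\rho(A)=0$ case), but the underlying argument is the same.
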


\begin{proof}
	Since $L$ is irreducible,
	$$0 = \lambda_1 < \lambda_2 \leq \cdots \leq \lambda_m$$
	Set $\mathbf{W} = I_m - \alpha L$, then the eigenvalues of $\mathbf{W}$ satisfy 
	$$1 = 1- \alpha \lambda_1 > 1- \alpha \lambda_2 \geq \cdots \geq 1- \alpha \lambda_m$$

	To achieve our goal, we want to have
	$$1 >  \rho(A) (1- \alpha \lambda_2) \geq \cdots \geq \rho(A) (1- \alpha \lambda_m) > -1$$
	This gives us an inequality: 
	\begin{equation} \label{eq_inequality_alpha}
		\lambda_2^{-1} \left ( 1 - \rho^{-1}(A) \right ) < \alpha < \lambda_m^{-1} \left ( 1 + \rho^{-1}(A) \right )
	\end{equation}
	We can see that such $\alpha$ exists if $\rho(A) < \frac{\lambda_m + \lambda_2} {\lambda_m - \lambda_2}$.
	
	Notice that \cite{horn_topics_in_matrix_analysis}
	$$a_{\mathbf{W} \otimes A} (\lambda) = \sum_{(\lambda_\mathbf{W}, \lambda_A) \in \{(\lambda_\mathbf{W}, \lambda_A): \lambda = \lambda_\mathbf{W} \lambda_A\}} a_{\mathbf{W}}(\lambda_\mathbf{W}) a_{A}(\lambda_A)$$
	For $\lambda \in \Lambda_U(\mathbf{W} \otimes A)$, if \eqref{eq_inequality_alpha} holds then it is true that $\lambda = \lambda_\mathbf{W} \lambda_A$ with $\lambda_\mathbf{W} \in sp(\mathbf{W})$ and $\lambda_A \in sp(A)$ if and only if $\lambda_\mathbf{W} = 1$. Since $\mathbf{W}$ is irreducible and symmetric, it is true that $a_{\mathbf{W} \otimes A}(\lambda) = a_{A}(\lambda)$ for $\lambda \in \Lambda_U(\mathbf{W} \otimes A)$.
\end{proof}

\begin{remark}
	The following are two extreme cases of $\lambda_2 = \lambda_m$ and $\lambda_2 = 0$.
	\begin{enumerate}
		\renewcommand{\labelenumi}{\Roman{enumi}.}
		\item If the communication graph is complete, i.e. $\lambda_2 = \lambda_m$, then for any $\rho(A)$, we can select $\alpha$ such that $\mathbf{W} \otimes A$ has eigenvalue, $\lambda$, either in $\Lambda_U(A)$ with the algebraic multiplicity equal to $a_A(\lambda)$ or inside the unit circle.

		\item If the communication graph is not connected then $\lambda_2 = 0$ and the inequality condition stated in Proposition \ref{prop_choosing_W} is not valid unless $\rho(A) < 1$. Also by the property of the Kronecker product, the algebraic multiplicity of $\lambda \in \Lambda_U(W \otimes A) \left( \cap \Lambda_U(A) \right)$ is larger than $a_A(\lambda)$.
	\end{enumerate}
\end{remark}

\begin{remark} [Relation with a Consensus Problem]
	Consider the following consensus problem.
	\begin{equation}
		x(k+1) = \mathbf{W} x(k) = (I_m - \alpha L) x(k)
	\end{equation}
	 Notice that consensus is achieved if $\lambda_2 \neq 0$ \cite{xiao2004_scl}, i.e.
	$$\lim_{k \to \infty} x(k) = \mathbf{1} \cdot (1/m \cdot \mathbf{1}^T x(0))$$
	We can see that the condition that $\lambda_2 \neq 0$ is equivalent to $\rho(A) < \frac{\lambda_m + \lambda_2} {\lambda_m - \lambda_2}$ with $\rho(A) = 1$.
\end{remark}

\begin{remark}
	The choice of $\alpha$ in $\mathbf{W} = I_m - \alpha L$ affects the convergence rate of the error dynamics \eqref{eq_error_dynamics02}. Using the main theorem, under a proper choice of the gain parameters $\mathbf{H}$, $\mathbf{S}$, $\mathbf{Q}$, and $\mathbf{R}$, we may show that for every $\delta > 0$, there exist $c > 0$ and $\bar{\lambda} < r \leq \bar{\lambda} + \delta$ such that 
	$$\left\| \begin{pmatrix} \epsilon(k) \\ z(k) \end{pmatrix} \right\| \leq c \cdot r^k$$
	for all $k \in \mathbb{N}$, where $\bar{\lambda} = max_{\lambda_L \in sp(L) \setminus \{1\}} |(1 - \alpha \lambda_L) \cdot \rho(A)|$.
\end{remark}

Using Proposition \ref{prop_choosing_W}, we state the following proposition.
\begin{prop} \label{prop_detectability}
	If a pair $(A, C)$ is detectable and it holds that
	$$\rho(A) < \frac{\lambda_m + \lambda_2} {\lambda_m - \lambda_2}$$
	Then $(\mathbf{W} \otimes A, \bar{C})$ with $\mathbf{W} = I_m - \alpha L$ is detectable for 
	$$\alpha \in \left ( \lambda_2^{-1} \left ( 1 - \rho^{-1}(A) \right ), \lambda_m^{-1} \left ( 1 + \rho^{-1}(A) \right ) \right )$$
\end{prop}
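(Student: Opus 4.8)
The plan is to reduce detectability of the large pair $(\mathbf{W}\otimes A,\bar C)$ to detectability of $(A,C)$ via the PBH (Popov–Belevitch–Hautus) test, using Proposition \ref{prop_choosing_W} to control where the unstable eigenvalues of $\mathbf{W}\otimes A$ live. Recall that $(\mathbf{W}\otimes A,\bar C)$ is detectable if and only if for every $\lambda$ with $|\lambda|\ge 1$ the only vector $\xi\in\mathbb{C}^{mn}$ satisfying $(\mathbf{W}\otimes A)\xi=\lambda\xi$ and $\bar C\xi=0$ is $\xi=0$. By Proposition \ref{prop_choosing_W}, under the stated hypotheses the choice $\mathbf{W}=I_m-\alpha L$ with $\alpha$ in the prescribed interval guarantees that every $\lambda\in\Lambda_U(\mathbf{W}\otimes A)$ satisfies $\lambda\in\Lambda_U(A)$ and is produced only by the eigenvalue $\lambda_\mathbf{W}=1$ of $\mathbf{W}$; since $L$ is irreducible, the eigenspace of $\mathbf{W}$ for the eigenvalue $1$ is one–dimensional, spanned by $\mathbf{1}$. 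So it suffices to restrict attention to such $\lambda$.

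The key step is the eigenvector structure. Fix $\lambda\in\Lambda_U(\mathbf{W}\otimes A)$ and let $\xi$ be a corresponding eigenvector with $\bar C\xi=0$. Writing $\xi=(\xi_1^T,\dots,\xi_m^T)^T$ with $\xi_i\in\mathbb{C}^n$, I would argue that the generalized eigenspace of $\mathbf{W}\otimes A$ associated with $\lambda$ is contained in $\mathrm{span}(\mathbf{1})\otimes \mathbb{C}^n$ — this follows because the only factorization $\lambda=\lambda_\mathbf{W}\lambda_A$ with $\lambda_\mathbf{W}\in sp(\mathbf{W})$, $\lambda_A\in sp(A)$ has $\lambda_\mathbf{W}=1$, together with the fact that $\mathbf{W}$ is diagonalizable (symmetric) so its eigenvalue $1$ contributes a clean Kronecker block $\mathbf{1}\mathbf{1}^T/m \otimes (\text{Jordan block of }A\text{ at }\lambda_A)$. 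Hence $\xi=\mathbf{1}\otimes w$ for some $w\in\mathbb{C}^n$ with $Aw=\lambda w$. Now the condition $\bar C\xi=0$, with $\bar C=(e_1\otimes C_1^T,\dots,e_m\otimes C_m^T)^T$, unpacks block-by-block to $C_i w=0$ for every $i$, i.e. $Cw=0$. Detectability of $(A,C)$ via PBH then forces $w=0$, hence $\xi=0$, and we are done.

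I expect the main obstacle to be making the eigenvector-structure argument fully rigorous — specifically, showing that a $\lambda$-eigenvector of $\mathbf{W}\otimes A$ with $|\lambda|\ge1$ must lie in $\mathrm{span}(\mathbf{1})\otimes\mathbb{C}^n$, rather than being a mixture coming from other $(\lambda_\mathbf{W},\lambda_A)$ pairs. This is exactly where the spectral separation hypothesis $\rho(A)<\tfrac{\lambda_m+\lambda_2}{\lambda_m-\lambda_2}$ and the interval constraint on $\alpha$ do the work: they ensure $|\lambda_A(1-\alpha\lambda_j)|<1$ for all $j\ge2$, so no eigenvalue of modulus $\ge1$ of $\mathbf{W}\otimes A$ can arise except through $\lambda_\mathbf{W}=1-\alpha\lambda_1=1$. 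A careful way to organize this is to diagonalize $\mathbf{W}=U\,\mathrm{diag}(1-\alpha\lambda_1,\dots,1-\alpha\lambda_m)\,U^T$ with $U$ orthogonal, apply $U^T\otimes I_n$ to the eigenvalue equation so that $\mathbf{W}\otimes A$ becomes block-diagonal with blocks $(1-\alpha\lambda_j)A$, observe that only the $j=1$ block can contribute an unstable eigenvalue, transform the constraint $\bar C\xi=0$ accordingly, and read off $Cw=0$. The remaining bookkeeping (the block expansion of $\bar C$ and the passage back through $U$) is routine.
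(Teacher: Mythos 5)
Your proposal is correct, and its overall skeleton is the same as the paper's: reduce detectability of $(\mathbf{W}\otimes A,\bar C)$ to a PBH-type check at unstable eigenvalues, use Proposition \ref{prop_choosing_W} to conclude that every $\lambda\in\Lambda_U(\mathbf{W}\otimes A)$ can only arise from the simple eigenvalue $\lambda_\mathbf{W}=1$ of $\mathbf{W}$, conclude that the corresponding eigenvectors have the form $\mathbf{1}\otimes v_A$ with $Av_A=\lambda v_A$, and then observe that $\bar C(\mathbf{1}\otimes v_A)$ stacks $C_1v_A,\dots,C_mv_A$, so it vanishes only if $Cv_A=0$, which detectability of $(A,C)$ forbids. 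Where you differ is in how the eigenvector-structure step is justified: the paper devotes a separate lemma (Lemma \ref{lemma_generalized_eigenvector}) to it, proved by building the full chain of generalized eigenvectors $v_\mathbf{W}\otimes(A-\lambda I_n)^l\xi_i$ and counting multiplicities, whereas you orthogonally diagonalize the symmetric $\mathbf{W}=U\,\mathrm{diag}(1-\alpha\lambda_1,\dots,1-\alpha\lambda_m)\,U^T$, conjugate $\mathbf{W}\otimes A$ by $U\otimes I_n$ into the block-diagonal form $\mathrm{diag}\bigl((1-\alpha\lambda_j)A\bigr)_{j=1}^m$, and note that only the $j=1$ block can carry an eigenvalue of modulus at least one, forcing the eigenvector (indeed the whole generalized eigenspace) into $\mathrm{span}(\mathbf{1})\otimes\mathbb{C}^n$. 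Your route is more elementary and self-contained for this proposition, since it exploits symmetry of $\mathbf{W}$ and avoids Jordan-chain bookkeeping; the paper's lemma, on the other hand, delivers the explicit generalized-eigenvector description that it reuses later in the proof of Lemma \ref{lemma_mode_stabilizability}, which is why the authors package it separately. Your informal first pass (the ``clean Kronecker block'' remark) is loose as stated, but the diagonalization argument you then outline closes it rigorously, so there is no gap; just note that, as in the paper, simplicity of the eigenvalue $1$ of $\mathbf{W}$ rests on irreducibility (connectedness) of $L$, which should be cited when you invoke it.
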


	Before proving the above proposition, we give a lemma that describes the structure of eigenvectors of $\mathbf{W} \otimes A$ which correspond to eigenvalues in $\Lambda_U(\mathbf{W} \otimes A)$ in terms of eigenvectors of $\mathbf{W}$ and $A$. In general, an eigenvector of the Kronecker product of two matrices may not be the Kronecker product of two eigenvectors of the individual matrices. However, the following lemma shows that under our choice of $\alpha$ described in Proposition \ref{prop_detectability}, an eigenvector of $\mathbf{W} \otimes A $ corresponding to an eigenvalue in $\Lambda_U(\mathbf{W} \otimes A)$ can be written as the Kronecker product of eigenvectors of $\mathbf{W}$ and $A$.
	
\begin{lemma} \label{lemma_generalized_eigenvector}
	Suppose the following inequality holds:
	$$\rho(A) < \frac{\lambda_m + \lambda_2} {\lambda_m - \lambda_2}$$
	Then, under the choice of 
	$$\alpha \in \left ( \lambda_2^{-1} \left ( 1 - \rho^{-1}(A) \right ), \lambda_m^{-1} \left ( 1 + \rho^{-1}(A) \right ) \right )$$
	for  $\lambda \in \Lambda_U(\mathbf{W} \otimes A)$, a corresponding eigenvector, $v$, can be written as
	$$v = v_\mathbf{W} \otimes v_A$$
	where $v_\mathbf{W} \in Null(\mathbf{W}-I)$ and $v_A \in Null(A-\lambda I)$.	
\end{lemma}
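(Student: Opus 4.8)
The plan is to use the symmetry of $\mathbf{W}$ to block-diagonalize $\mathbf{W}\otimes A$ and then simply read off the eigenvector structure. Since $\mathbf{W} = I_m - \alpha L$ is real symmetric, I would fix an orthonormal basis $v_1,\dots,v_m$ of $\mathbb{R}^m$ with $\mathbf{W} v_i = \beta_i v_i$, where $\beta_i = 1 - \alpha\lambda_i$; because the condition $\rho(A) < \frac{\lambda_m+\lambda_2}{\lambda_m-\lambda_2}$ (together with $\Lambda_U(\mathbf{W}\otimes A)\neq\emptyset$) forces $\lambda_1 = 0 < \lambda_2$, we have $\beta_1 = 1$, and by the computation already carried out in the proof of Proposition \ref{prop_choosing_W} the stated range of $\alpha$ guarantees $|\beta_i| < \rho^{-1}(A)$ for every $i \geq 2$. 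Relative to the decomposition $\mathbb{C}^{mn} = \bigoplus_{i=1}^m (v_i \otimes \mathbb{C}^n)$, the operator $\mathbf{W}\otimes A$ is block diagonal with $i$-th block $\beta_i A$, since $(\mathbf{W}\otimes A)(v_i \otimes w) = v_i \otimes (\beta_i A w)$.

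Next I would expand an arbitrary eigenvector $v$ of $\mathbf{W}\otimes A$ associated with $\lambda \in \Lambda_U(\mathbf{W}\otimes A)$ along this decomposition, writing $v = \sum_{i=1}^m v_i \otimes w_i$ with $w_i \in \mathbb{C}^n$ (complex, since $\lambda$ may be complex, while the $v_i$ remain real). Applying $\mathbf{W}\otimes A$ and using the linear independence of $v_1,\dots,v_m$, the eigenvalue equation $(\mathbf{W}\otimes A)v = \lambda v$ decouples into the $m$ scalar-matrix equations $\beta_i A w_i = \lambda w_i$, $i = 1,\dots,m$.

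The final step is to discard every block except $i=1$. Because $\lambda \in \Lambda_U$ gives $|\lambda|\geq 1$, any index $i$ with $\beta_i = 0$ yields $\lambda w_i = 0$, hence $w_i = 0$. For $i \geq 2$ with $\beta_i \neq 0$, if $w_i \neq 0$ then $\lambda/\beta_i \in sp(A)$, so $|\lambda| = |\beta_i|\,|\lambda/\beta_i| \leq |\beta_i|\,\rho(A) < \rho^{-1}(A)\,\rho(A) = 1$, contradicting $|\lambda|\geq 1$; hence $w_i = 0$ there as well. Therefore $v = v_1 \otimes w_1$ with $\mathbf{W} v_1 = v_1$, i.e. $v_1 \in Null(\mathbf{W}-I)$, and (using $\beta_1 = 1$) $A w_1 = \lambda w_1$, i.e. $w_1 \in Null(A-\lambda I)$; moreover $w_1 \neq 0$ since $v \neq 0$. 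Taking $v_\mathbf{W} = v_1$ and $v_A = w_1$ gives exactly the asserted form.

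I do not expect a genuine obstacle here: the argument is essentially bookkeeping on the Kronecker spectrum. The only points requiring care are that the \emph{strict} inequalities delimiting the admissible $\alpha$ are precisely what produces the strict bound $|\beta_i|\,\rho(A) < 1$ for $i \geq 2$ (so that no unstable eigenvalue of $\mathbf{W}\otimes A$ can originate in those blocks), and that a possibly complex eigenvector must be expanded over $\mathbb{C}$ even though the $v_i$ are real; both are already handled by the eigenvalue analysis in Proposition \ref{prop_choosing_W}, which I would cite rather than redo.
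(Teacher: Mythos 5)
Your proof is correct, but it follows a genuinely different route from the paper's. You exploit the symmetry of $\mathbf{W}$ to block-diagonalize $\mathbf{W}\otimes A$ along the orthonormal eigenbasis $v_1,\dots,v_m$ of $\mathbf{W}$, decouple the eigenvalue equation into $\beta_i A w_i = \lambda w_i$, and kill every block with $i\geq 2$ using the strict bound $|\beta_i|\,\rho(A)<1$ supplied by the admissible range of $\alpha$; what survives is exactly the $\beta_1=1$ block, giving $v=v_1\otimes w_1$ with $Aw_1=\lambda w_1$. The paper instead argues through Jordan structure: it invokes Proposition \ref{prop_choosing_W} to get $a_{\mathbf{W}\otimes A}(\lambda)=a_A(\lambda)$ for $\lambda\in\Lambda_U(\mathbf{W}\otimes A)$, constructs the candidate generalized eigenvectors $v_\mathbf{W}\otimes (A-\lambda I_n)^{l}\xi_i$ from the Jordan chains of $A$, and concludes by a counting argument that these exhaust the (generalized) eigenspace, so in particular every eigenvector has the tensor form. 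Your argument is more elementary and self-contained (it needs only diagonalizability of $\mathbf{W}$, no multiplicity bookkeeping or Jordan chains), and it makes transparent why connectivity of the graph---i.e.\ simplicity of the eigenvalue $1$ of $\mathbf{W}$---is what forces a pure Kronecker-product eigenvector (compare Example \ref{ex_connectedness}); note that the paper assumes $L$ irreducible in Proposition \ref{prop_choosing_W} and uses it tacitly here, so you could simply invoke that standing assumption instead of your parenthetical argument that $\Lambda_U(\mathbf{W}\otimes A)\neq\emptyset$ forces $\lambda_2>0$ (which is defensible but slightly delicate since the stated $\alpha$-interval is ill-defined when $\lambda_2=0$). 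What the paper's heavier construction buys is the explicit description of the full generalized-eigenvector structure of $\mathbf{W}\otimes A$ at unstable eigenvalues, which is reused later (e.g.\ in the proof of Lemma \ref{lemma_mode_stabilizability}); your decomposition also yields $g_{\mathbf{W}\otimes A}(\lambda)=g_A(\lambda)$ immediately, but does not by itself record the Jordan-chain information.
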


\begin{proof}
	Let $\lambda_\mathbf{W}$ and $\lambda_A$ be eigenvalues of $\mathbf{W}$ and $A$, respectively. Then by our choice of $\mathbf{W} = I_m - \alpha L$ with $\alpha \in \left ( \lambda_2^{-1} \left ( 1 - \rho^{-1}(A) \right ), \lambda_m^{-1} \left ( 1 + \rho^{-1}(A) \right ) \right )$ and Proposition \ref{prop_choosing_W}, $\lambda \in \Lambda_U(\mathbf{W} \otimes A)$ can be written as $\lambda = \lambda_\mathbf{W} \lambda_A$ only if $\lambda_\mathbf{W} = 1$ and $\lambda_A = \lambda$. Since the algebraic multiplicity of $\lambda_\mathbf{W} = 1$ is one, it holds that $a_{\mathbf{W} \otimes A}(\lambda) = a_A(\lambda)$.

	We can write the set of all generalized eigenvectors of $A$ corresponding to $\lambda_A = \lambda$ as
	\begin{equation}
		\begin{split}
			\left \{ \xi_1, (A - \lambda_{A} I_n)\xi_1, \cdots, (A - \lambda_{A} I_n)^{m(\xi_1)}\xi_1, \cdots, \xi_{g_{A}(\lambda_{A})}, \right . \\ \left . (A - \lambda_{A} I_n)\xi_{g_{A}(\lambda_{A})}, \cdots, (A - \lambda_{A} I_n)^{m(\xi_{g_{A}(\lambda_{A})})}\xi_{g_{A}(\lambda_{A})} \right \}
		\end{split}
	\end{equation}
	where $m(\xi_i)$ is the largest integer such that $(A - \lambda_{A} I_n)^{m(\xi_i)}\xi_i$ is nonzero (see \cite{axler2004_linear_algebra} for details). We can see that the cardinality of the above set is equal to $a_A(\lambda_{A})$, and 
	$$\left \{ (A - \lambda_{A} I_n)^{m(\xi_1)}\xi_1, \cdots, (A - \lambda_{A} I_n)^{m(\xi_{g_{A}(\lambda_{A})})}\xi_{g_{A}(\lambda_{A})}\right \}$$ 
	becomes the set of all eigenvectors of $A$ corresponding to $\lambda_{A}$.

	We claim that for $\lambda = \lambda_A$, 
	\begin{equation} \label{eq_gen_eig_01}
		v_\mathbf{W} \otimes (A - \lambda_{A} I_n)^{l} \xi_i
	\end{equation}
	is a generalized eigenvector of $\mathbf{W} \otimes A$ corresponding to $\lambda$ for $l \in \{0, \cdots, m(\xi_i)\}$ and $i \in \{1, \cdots, g_{A}(\lambda_{A})\}$, where $v_\mathbf{W} \in Null (\mathbf{W}-I)$. To prove this claim, consider the following.
	
	For $0 \leq l < m(\xi_i)$,
	\begin{equation}
		\begin{split}
			&\left ( \mathbf{W} \otimes A - \lambda_A I_{m \cdot n} \right ) \left (v_\mathbf{W} \otimes (A - \lambda_{A} I_n)^{l} \xi_i \right ) \\
			&= v_\mathbf{W} \otimes A(A - \lambda_{A} I_n)^{l} \xi_i - v_\mathbf{W} \otimes \lambda_{A}(A - \lambda_{A} I_n)^{l} \xi_i \\
			&= v_\mathbf{W} \otimes (A - \lambda_{A} I_n)^{l+1} \xi_i \neq 0
		\end{split}
	\end{equation}
	The first equality comes from the fact that $\mathbf{W} \cdot v_\mathbf{W} = v_\mathbf{W}$. For $l = m(\xi_i)$, \eqref{eq_gen_eig_01} becomes an eigenvector of $\mathbf{W} \otimes A$ since $(A - \lambda_{A} I_n)^{m(\xi_i)+1}\xi_i = 0$.

	Note that let $v_1, v_2 \in \mathbb{R}^m$ and $w_1, w_2 \in \mathbb{R}^n$ be nonzero vectors, then $v_1 \otimes w_1$ and $v_2 \otimes w_2$ are linearly independent if and only if either $v_1$ and $v_2$ or $w_1$ and $w_2$ are linearly independent. Hence 
	$$\{v_\mathbf{W} \otimes (A - \lambda_{A} I_n)^{l} \xi_i\}_{l \in \{0, \cdots, m(\xi_i)\}, ~ i \in \{1, \cdots, g_A(\lambda_{A})\}}$$ 
	is the set of all generalized eigenvectors of $\mathbf{W} \otimes A$ corresponding to $\lambda = \lambda_{A}$. In particular, 
	$$\{v_\mathbf{W} \otimes (A - \lambda_{A} I_n)^{m(\xi_i)} \xi_i\}_{i \in \{1, \cdots, g_A(\lambda_{A})\}}$$
	becomes the set of all eigenvectors of $\mathbf{W} \otimes A$ corresponding to $\lambda = \lambda_A$. This proves our claim.
\end{proof}

\begin{proof} [The proof of Proposition \ref{prop_detectability}]
	To show that $(\mathbf{W} \otimes A, \bar{C})$ is detectable, we only need to show that for an unstable eigenvalue of $\mathbf{W} \otimes A$, its corresponding eigenvector, $v$, satisfies $\bar{C} v \neq 0$. By Lemma \ref{lemma_generalized_eigenvector}, for $\lambda \in \Lambda_U(\mathbf{W} \otimes A)$, its corresponding eigenvector, $v$, is of the form $v = v_\mathbf{W} \otimes v_{A}$, where $v_\mathbf{W} \in Null(\mathbf{W} - I)$ and $v_{A} \in Null(A - \lambda I)$. Since $\mathbf{W}$ is a stochastic matrix, the eigenvector of $\mathbf{W}$ corresponding to $\lambda_\mathbf{W} = 1$ is $1/\sqrt{m} \cdot \mathbf{1}$. Thus, we obtain
	\begin{equation}
		\bar{C} v = 1/\sqrt{m} \cdot \bar{C} (\mathbf{1}  \otimes v_{A}) = 1/\sqrt{m} \cdot \begin{pmatrix}
															C_1 v_{A} \\
															C_2 v_{A} \\
															\vdots \\
															C_m v_{A}
														\end{pmatrix}
	\end{equation}
	Since $(A, C)$ is detectable, $\bar{C} v$ is nonzero. This proves the statement that the pair $(\mathbf{W} \otimes A, \bar{C})$ is detectable.
\end{proof}

In a view of output feedback, if $(\mathbf{W} \otimes A, \bar{C})$ is not detectable, it is not possible to find the gain parameters $\mathbf{H}$, $\mathbf{S}$, $\mathbf{Q}$, and $\mathbf{R}$ which stabilize \eqref{eq_error_dynamics02}. In this context, if $(A, C)$ is not detectable then $(\mathbf{W} \otimes A, \bar{C})$ is also not detectable; hence there exist no gain parameters that stabilize \eqref{eq_error_dynamics02}.

\begin{ex} \label{ex_connectedness}
	This example shows the reason why we need a connected communication graph, even if $(A, C)$ is detectable. Suppose the system matrices $A, C$ and the weight matrix $\mathbf{W}$ are given as follows.
	\begin{equation}
		\begin{split}
			A = I_3, C_1 = [1~0~0],  C_2 = [0~1~0], C_3 = [0~0~1], \\
			\mathbf{W} = I_3- \alpha L = \begin{pmatrix}
							1-\alpha & \alpha & 0 \\
							\alpha & 1-\alpha & 0 \\
							0 & 0 & 1
						\end{pmatrix}
		\end{split}			
	\end{equation}

	We can see that $(A, C)$ is observable (thus detectable). However, for the eigenvector $v = (0~0~1)^T \otimes (1~1~0)^T$ of $\mathbf{W} \otimes A$ corresponding to an eigenvalue at $1$, it holds that $\bar{C} v = 0$. Hence the system $(\mathbf{W} \otimes A, \bar{C})$ is not detectable.
\end{ex}

\subsection{Gain Parameters: $\mathbf{H}$, $\mathbf{Q}$, $\mathbf{R}$, and $\mathbf{S}$}
Here we study the choice of the gain parameters of the augmented observer \eqref{eq_augmented_estimator} subject to $\mathbf{W} = I_m - \alpha L$. This is done by writing our formulation into a form that allows us to apply results from decentralized control literature \cite{wang1973_tac, davison1990_tac}. To apply the results from decentralized control literature, we need to verify, in a view of output feedback, whether unstable modes (eigenvalues) in our formulation can be stabilizable \footnote{We want to make a note that the notion of these unstable modes of our network of observers, which is subject to a sparsity constraint, is slightly different from that of the (standard) Luenberger observer due to the sparsity constraint. For more detail, we refer readers to \cite{gong1992_ieee_tac, anderson1981_automatica, davison1983_automatica}}. An algebraic way to check this condition is presented in Theorem 1 of \cite{davison1990_tac}. Using this condition, we explicitly state and prove a sufficient condition under which each unstable mode of the error dynamics is stabilizable. This draws a direct relation between the detectability of $(A, C)$ and the existence of the gain parameters for the observers under a sparsity constraint.

The following notation is additionally defined for convenience.
\begin{itemize}
	\item Given $\{V_i\}_{i \in \{1, \cdots, m\}}$, $diag \left( \{V_i\}_{i \in \{1, \cdots, m\}} \right)$ is a block diagonal matrix.
	
	\item Given $\{V_{i,j}\}_{j \in \{1, \cdots, m\}}$ and $\mathcal{N} = \{j_1, \cdots, j_s\} \subseteq \{1, \cdots, m\}$, we define $V_{i,\mathcal{N} \downarrow} \overset{def}{=} \begin{pmatrix} V_{i,j_1} \\ \vdots \\ V_{i,j_s} \end{pmatrix}$ and $E_{\mathcal{N}} = \left( e_{j_1}, \cdots, e_{j_s}\right)$.
\end{itemize}

First, observe that $\mathbf{W} \otimes A - \mathbf{H} \bar{C}$ can be written as
\begin{equation}
	\mathbf{W} \otimes A - \mathbf{H} \bar{C} = \mathbf{W} \otimes A - \sum_{i=1}^{m} \bar{B}_i \bar{\mathbf{H}}_i \bar{C}_i
\end{equation}
where $\bar{B}_i \overset{def}{=} E_{\mathcal{N}_i} \otimes I_n$, $\bar{\mathbf{H}}_i \overset{def}{=} \mathbf{H}_{i,\mathcal{N}_i \downarrow}$, and $\bar{C}_i \overset{def}{=} e_i^T \otimes C_i$. Notice that unlike $\mathbf{H}$, there is no sparsity constraint imposed on $\bar{\mathbf{H}}_i$ for each $i$.

Letting 
	$$\bar{B} = \left( \bar{B}_1, \cdots, \bar{B}_m \right), \quad \bar{C} = \left( \bar{C}_1^T, \cdots, \bar{C}_m^T \right)^T$$
	$$\bar{\mathbf{H}} = diag \left ( \{ \bar{\mathbf{H}}_i \}_{i \in \{1, \cdots, m\}} \right ), \bar{\mathbf{S}} = diag \left ( \{ \bar{\mathbf{S}}_i \}_{i \in \{1, \cdots, m\}} \right )$$
with $\bar{\mathbf{H}}_i \overset{def}{=} \mathbf{H}_{i,\mathcal{N}_i \downarrow}$ and $\bar{\mathbf{S}}_i \overset{def}{=} \mathbf{S}_{i,\mathcal{N}_i \downarrow}$, we can rewrite \eqref{eq_error_dynamics02} as follows.
\begin{equation} \label{eq_error_dynamics03}
	\begin{split}
		\begin{pmatrix}
			\epsilon (k+1) \\
			z(k+1)
		\end{pmatrix} = 
		\begin{pmatrix}
			\mathbf{W} \otimes A - \bar{B} \bar{\mathbf{H}} \bar{C} & -\bar{B} \bar{\mathbf{S}} \\
			\mathbf{Q} \bar{C} & \mathbf{R}
		\end{pmatrix}
		\begin{pmatrix}
			\epsilon (k) \\
			z(k)
		\end{pmatrix}
	\end{split}
\end{equation}

To prove the main theorem, we first find the Kalman decomposition for both $(A, C_i)$ and $(\mathbf{W} \otimes A, \bar{B}_i, \bar{C}_i)$, and explicitly show that the observable part of $(A, C_i)$ is actually contained in the controllable and observable part of $(\mathbf{W} \otimes A, \bar{B}_i, \bar{C}_i)$. In the preceding propositions, we have seen that $\Lambda_U(A) = \Lambda_U(\mathbf{W} \otimes A)$ with the same algebraic multiplicity, i.e. $a_A(\lambda) = a_{\mathbf{W} \otimes A}(\lambda) \text{ for } \lambda \in \Lambda_U(A)$. Hence if $(A, C)$ is detectable then every unstable mode of $\mathbf{W} \otimes A$ is contained in the controllable and observable part of  $(\mathbf{W} \otimes A, \bar{B}_i, \bar{C}_i)$ for some $i \in \{1, \cdots, m\}$. Applying the result in \cite{wang1973_tac, brasch1970_tac}, we verify that every unstable mode of $\mathbf{W} \otimes A$ can be placed inside the unit circle by a proper choice of the gain parameters $\mathbf{H}$, $\mathbf{Q}$, $\mathbf{R}$, and $\mathbf{S}$. This proves the existence of the observers.

The following lemmas provide a basis for proving our main theorem.

\begin{lemma} \label{lemma_mode_stabilizability}
	Suppose the pair $(A, C)$ is detectable and 
	$$\rho(A) < \frac{\lambda_m + \lambda_2} {\lambda_m - \lambda_2}$$
	holds. Then, under the choice of 
	$$\alpha \in \left ( \lambda_2^{-1} \left ( 1 - \rho^{-1}(A) \right ), \lambda_m^{-1} \left ( 1 + \rho^{-1}(A) \right ) \right ),$$
	taking the multiplicity into account, it is true that every unstable mode of $\mathbf{W} \otimes A$ is in the controllable and observable part of $(\mathbf{W} \otimes A, \bar{B}_i, \bar{C}_i)$ for some $i \in \{1, \cdots, m\}$.
\end{lemma}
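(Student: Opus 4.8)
The plan is to push the Kalman structure of each single‑output pair $(A,C_i)$ up to the lifted triple $(\mathbf{W}\otimes A,\bar{B}_i,\bar{C}_i)$ through the embedding $\xi\mapsto\mathbf{1}\otimes\xi$, and then invoke detectability of the stacked pair $(A,C)$ to cover all of $\Lambda_U(\mathbf{W}\otimes A)$. Two elementary facts about $\mathbf{W}=I_m-\alpha L$ drive everything: $L$ symmetric makes $\mathbf{W}$ symmetric, so its left and right eigenvectors coincide; and $L$ irreducible together with $\mathbf{W}\mathbf{1}=\mathbf{1}$ makes the eigenvalue $1$ of $\mathbf{W}$ simple with eigenvector $\mathbf{1}$, hence $\mathbf{W}^k\mathbf{1}=\mathbf{1}$ for all $k$ and $Null(\mathbf{W}-I)=\mathrm{span}\{\mathbf{1}\}$. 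Applying Lemma~\ref{lemma_generalized_eigenvector} to $\mathbf{W}\otimes A$ and to $(\mathbf{W}\otimes A)^T=\mathbf{W}\otimes A^T$, for $\lambda\in\Lambda_U(\mathbf{W}\otimes A)$ every right eigenvector is $\mathbf{1}\otimes v_A$ with $v_A\in Null(A-\lambda I)$, every left eigenvector is $\mathbf{1}\otimes w_A$ with $w_A$ a left eigenvector of $A$ at $\lambda$, and the Jordan chains of $\mathbf{W}\otimes A$ at $\lambda$ are exactly the images under $\xi\mapsto\mathbf{1}\otimes\xi$ of those of $A$ at $\lambda$; by Proposition~\ref{prop_choosing_W} this exhausts the full multiplicity $a_{\mathbf{W}\otimes A}(\lambda)=a_A(\lambda)$.

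For controllability I would use the Hautus test. For any left eigenvector $\mathbf{1}\otimes w_A$ of $\mathbf{W}\otimes A$ at $\lambda$,
$$(\mathbf{1}\otimes w_A)^T\bar{B}_i=(\mathbf{1}\otimes w_A)^T(E_{\mathcal{N}_i}\otimes I_n)=(\mathbf{1}^T E_{\mathcal{N}_i})\otimes w_A^T,$$
and since $\mathbf{1}^T E_{\mathcal{N}_i}$ is the all‑ones row of length $|\mathcal{N}_i|$ this is nonzero; hence $\mathrm{rank}\begin{pmatrix}\lambda I_{mn}-\mathbf{W}\otimes A & \bar{B}_i\end{pmatrix}=mn$, so $\lambda$ is a controllable eigenvalue of $(\mathbf{W}\otimes A,\bar{B}_i)$ and the whole generalized eigenspace of $\mathbf{W}\otimes A$ at $\lambda$ lies in the controllable subspace $\mathcal{R}_i$ of $(\mathbf{W}\otimes A,\bar{B}_i)$ --- for \emph{every} $i$. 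For observability I would use $\bar{C}_i(\mathbf{W}\otimes A)^k(\mathbf{1}\otimes\xi)=(e_i^T\mathbf{W}^k\mathbf{1})(C_iA^k\xi)=C_iA^k\xi$, which shows that $\mathbf{1}\otimes\xi$ lies in the unobservable subspace $\mathcal{U}_i$ of $(\mathbf{W}\otimes A,\bar{C}_i)$ if and only if $\xi$ lies in the unobservable subspace $\mathcal{U}_A^{(i)}:=\bigcap_{k\ge0}Null(C_iA^k)$ of $(A,C_i)$.

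The last step combines these using detectability. The unobservable subspace of $(A,C)$ is $\bigcap_i\mathcal{U}_A^{(i)}$, so detectability says this $A$‑invariant subspace carries no unstable eigenvalue and hence meets the generalized eigenspace of $A$ at any $\lambda\in\Lambda_U(A)$ only in $\{0\}$. Fix $\lambda\in\Lambda_U(\mathbf{W}\otimes A)=\Lambda_U(A)$ and a Jordan chain of $A$ at $\lambda$ with bottom eigenvector $\eta$; pick $i$ with $\eta\notin\mathcal{U}_A^{(i)}$, i.e.\ $C_i\eta\neq0$. As $\mathcal{U}_A^{(i)}$ is $(A-\lambda I)$‑invariant, no vector of that chain lies in it, so by the observability identity no vector of the corresponding chain $J=\mathbf{1}\otimes(\text{chain})$ of $\mathbf{W}\otimes A$ lies in $\mathcal{U}_i$; since $J$ is a single $(\mathbf{W}\otimes A)$‑invariant Jordan block whose only eigenvector direction is $\mathbf{1}\otimes\eta\notin\mathcal{U}_i$, this forces $J\cap\mathcal{U}_i=\{0\}$. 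With $J\subseteq\mathcal{R}_i$ from the controllability step, $J$ injects --- preserving the $\mathbf{W}\otimes A$ action, hence the chain length --- into $\mathcal{R}_i/(\mathcal{R}_i\cap\mathcal{U}_i)$, which represents the controllable and observable block of the Kalman decomposition of $(\mathbf{W}\otimes A,\bar{B}_i,\bar{C}_i)$. Ranging over all Jordan chains of $A$ at $\lambda$ exhausts, by Lemma~\ref{lemma_generalized_eigenvector}, the whole structure of $\mathbf{W}\otimes A$ at $\lambda$, which proves the claim.

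The main obstacle is the bookkeeping in this last step: the index $i$ generally depends on the Jordan chain (no single $C_i$ need detect $\lambda$ fully), so one must rule out a chain being split across channels. The resolution is that unobservable subspaces are $(\mathbf{W}\otimes A)$‑invariant, so a chain that is not annihilated at its bottom eigenvector by $\bar{C}_i$ meets $\mathcal{U}_i$ only in $\{0\}$ and therefore survives intact, with its multiplicity, into the controllable and observable block of that one channel.
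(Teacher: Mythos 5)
Your proof is correct, and it reaches the lemma by a genuinely different route from the paper's. The paper argues through explicit Kalman-decomposition bases: it builds $P$ for $(A,C_i)$ and then a carefully assembled basis $P_{\mathbf{W}}$ (supported by Lemma \ref{lemma_null_space_W} and ad hoc choices of the columns $\eta_{\{1,\cdots,t\}}$, $\xi_{\{1,\cdots,s\}}$), and reads off block identities such as $\bar{A}_{O,4}^{(i)}=A_{O}^{(i)}$ and $\bar{C}_O^{(i)}=\bigl((e_i^T\otimes C_i)\xi_{\{1,\cdots,s\}},\,(1/\sqrt{m})C_O^{(i)}\bigr)$ to show that the observable part of $(A,C_i)$ sits inside the controllable and observable part of $(\mathbf{W}\otimes A,\bar{B}_i,\bar{C}_i)$; only the final controllability check, $(\beta_m\otimes q)^T\bar{B}_i\neq 0$ for left eigenvectors $q$, coincides with your Hautus step. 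You instead work coordinate-free: Lemma \ref{lemma_generalized_eigenvector} applied to $\mathbf{W}\otimes A$ and, legitimately (since $\mathbf{W}^T=\mathbf{W}$ and $\rho(A^T)=\rho(A)$), to its transpose pins down the unstable right and left eigenstructure as $\mathbf{1}\otimes(\cdot)$; the PBH test plus the standard fact that the generalized eigenspace of a controllable eigenvalue lies in the controllable subspace gives $J\subseteq\mathcal{R}_i$ for every channel; the identity $\bar{C}_i(\mathbf{W}\otimes A)^k(\mathbf{1}\otimes\xi)=C_iA^k\xi$ reduces lifted unobservability to unobservability for $(A,C_i)$; and the invariant-subspace argument (a nonzero invariant intersection $J\cap\mathcal{U}_i$ would contain the unique eigendirection $\mathbf{1}\otimes\eta\notin\mathcal{U}_i$) handles the multiplicity bookkeeping chain by chain, which is exactly the sense in which the lemma, and its use in Theorem \ref{thm_existence_gain}, takes multiplicity into account --- no single channel need see a repeated mode with its full algebraic multiplicity, and the paper's proof delivers no more than this either. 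Your route dispenses with Lemma \ref{lemma_null_space_W} and the delicate construction of $P_{\mathbf{W}}$, and makes explicit that unstable modes are controllable through every channel; the paper's construction yields the slightly stronger structural by-product that the whole observable part of $(A,C_i)$, stable modes included, embeds in the controllable and observable part of the lifted triple. One point you should state explicitly: simplicity of the eigenvalue $1$ of $\mathbf{W}$, hence $Null(\mathbf{W}-I)=\mathrm{span}\{\mathbf{1}\}$, relies on irreducibility (connectedness) of $L$, which the lemma inherits from the standing assumption used in Proposition \ref{prop_choosing_W}.
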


\begin{proof}
	The proof is given in Appendix \ref{subsection_lemma_mode_stabilizability}.
\end{proof}

\begin{lemma} \label{lemma_fixed_mode}
	Given $\mathcal{A} \in \mathbb{R}^{n \times n}$, $\left\{ \mathcal{B}_i \right\}_{i \in \{1, \cdots, m\}}$, and $\left\{ \mathcal{C}_i \right\}_{i \in \{1, \cdots, m\}}$, consider a partition, $\{i_1, \cdots, i_l\}$ and $\{i_{l+1}, \cdots, i_m\}$, of the set $\{1, \cdots, m\}$. For $\lambda \in sp(\mathcal{A})$, let $k_\mathcal{B} = dim \left( Null \left( \begin{pmatrix} \mathcal{A} - \lambda I & \mathcal{B}_{i_1} & \cdots & \mathcal{B}_{i_l}  \end{pmatrix}^T \right) \right)$. Suppose there exist $k_\mathcal{B}$ linearly independent eigenvectors, $\{\eta_j\}_{j \in \{1, \cdots, k_\mathcal{B} \}}$, of $\mathcal{A}$ corresponding to $\lambda$ such that 
	$$\eta_j \notin Null \begin{pmatrix} \mathcal{C}_{i_{l+1}} \\ \vdots \\ \mathcal{C}_{i_{m}} \end{pmatrix}, ~ \forall j \in \{1, \cdots, k_\mathcal{B}\}$$
	Then it holds that 
	$$rank \begin{pmatrix} \mathcal{A} - \lambda I & \mathcal{B}_{i_1} & \cdots & \mathcal{B}_{i_l} \\ \mathcal{C}_{i_{l+1}} & 0 & \cdots & 0 \\ \vdots & \vdots & \ddots & \vdots \\ \mathcal{C}_{i_{m}} & 0 & \cdots & 0 \end{pmatrix} \geq n$$
	where $n$ is the dimension of the matrix $\mathcal{A}$.
\end{lemma}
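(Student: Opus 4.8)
The plan is to argue by contradiction on the rank of the displayed block matrix. Suppose
$$M \overset{def}{=} \begin{pmatrix} \mathcal{A} - \lambda I & \mathcal{B}_{i_1} & \cdots & \mathcal{B}_{i_l} \\ \mathcal{C}_{i_{l+1}} & 0 & \cdots & 0 \\ \vdots & \vdots & \ddots & \vdots \\ \mathcal{C}_{i_{m}} & 0 & \cdots & 0 \end{pmatrix}$$
has rank strictly less than $n$. Since $M$ has $n$ columns in its first block column and the remaining block columns only contribute to the top block-row, a rank deficiency below $n$ forces the existence of a nonzero left null vector of $M$ whose support is confined in a way that kills all the $\mathcal{B}_{i_p}$ blocks. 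Concretely, I would look at left null vectors: there is a nonzero row vector $(w^T, u_{l+1}^T, \dots, u_m^T)$ with $w^T(\mathcal{A}-\lambda I) + \sum_{p=l+1}^m u_p^T \mathcal{C}_{i_p} = 0$ and $w^T \mathcal{B}_{i_q} = 0$ for all $q \in \{1,\dots,l\}$. The first relation says $w^T$ (if nonzero) is a left eigenvector of $\mathcal{A}$ for $\lambda$ modulo the row space of the $\mathcal{C}_{i_p}$'s; the second says $w$ lies in the common left null space of $\mathcal{B}_{i_1},\dots,\mathcal{B}_{i_l}$, equivalently $w \in Null\big(\begin{pmatrix}\mathcal{A}-\lambda I & \mathcal{B}_{i_1} & \cdots & \mathcal{B}_{i_l}\end{pmatrix}^T\big)$ once one also uses $w^T(\mathcal{A}-\lambda I)=0$. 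I expect the cleanest route is actually to count dimensions directly rather than chase a single null vector.

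So the main step I would carry out is a dimension count. Let $K = Null\big(\begin{pmatrix}\mathcal{A}-\lambda I & \mathcal{B}_{i_1} & \cdots & \mathcal{B}_{i_l}\end{pmatrix}^T\big) \subseteq \mathbb{R}^n$, which has dimension $k_\mathcal{B}$ by hypothesis; note every $w \in K$ satisfies both $w^T(\mathcal{A}-\lambda I)=0$ and $w^T\mathcal{B}_{i_q}=0$ for all $q\le l$, i.e. $K$ is a space of left eigenvectors of $\mathcal{A}$ for $\lambda$ that are simultaneously left-annihilated by the $\mathcal{B}_{i_q}$. The rank of $\begin{pmatrix}\mathcal{A}-\lambda I & \mathcal{B}_{i_1} & \cdots & \mathcal{B}_{i_l}\end{pmatrix}$ is therefore $n - k_\mathcal{B}$. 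Stacking the extra block-rows $\mathcal{C}_{i_{l+1}},\dots,\mathcal{C}_{i_m}$ underneath the first block column can only increase the rank, and the increase is exactly the dimension of the image of $K$ under the stacked map $w \mapsto \big(\text{projection of } w \text{ onto the quotient by }\ldots\big)$ — more precisely, $rank\,M = (n-k_\mathcal{B}) + \dim\{\,w \in K : \text{the column constraints from }\mathcal{C}\text{ are independent}\,\}$. The hypothesis that there exist $k_\mathcal{B}$ linearly independent eigenvectors $\eta_1,\dots,\eta_{k_\mathcal{B}}$ of $\mathcal{A}$ for $\lambda$ with $\eta_j \notin Null\big(\operatorname{col}(\mathcal{C}_{i_{l+1}},\dots,\mathcal{C}_{i_m})\big)$ is what I will use to show this added contribution is the full $k_\mathcal{B}$, giving $rank\,M \ge (n-k_\mathcal{B}) + k_\mathcal{B} = n$.

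The place I expect to have to be careful — and the main obstacle — is matching the ``$k_\mathcal{B}$ eigenvectors not in the null space of the stacked $\mathcal{C}$'' hypothesis to a genuine rank increase of $k_\mathcal{B}$; membership of each single $\eta_j$ outside a null space only gives rank increase $\ge 1$ from that vector, whereas I need the $k_\mathcal{B}$ independent directions to produce $k_\mathcal{B}$ independent new rows collectively. I would handle this by viewing the situation through the PBH-type matrix on the left: the left null space of $M$, intersected appropriately, must be contained in $K$, and any $w$ in it forces $w^T \mathcal{C}_{i_p} = 0$ for all $p > l$ after using the eigenvector relation, hence $w$ is left-orthogonal to the row spaces of all the $\mathcal{C}_{i_p}$. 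Because $K$ is spanned by (left) eigenvectors and the $\eta_j$ are a full independent family of (right) eigenvectors detected by the $\mathcal{C}$'s, a nonzero such $w$ would contradict the hypothesis once one pairs $w$ against a suitable combination of the $\eta_j$ (using that for a fixed eigenvalue the left- and right-eigenspaces have equal dimension and a nondegenerate pairing can be arranged on the relevant invariant subspace). Thus the left null space of $M$ has dimension $\le (\text{number of extra rows}) $ in the right way, forcing $rank\,M \ge n$. If the clean pairing argument turns out to be fussy, the fallback is the purely computational dimension count above, tracking $\dim \ker$ of the two stacked maps and invoking the hypothesis to kill the intersection term.
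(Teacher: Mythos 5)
Your main computational route is exactly the paper's proof: the flattened matrix $\begin{pmatrix}\mathcal{A}-\lambda I & \mathcal{B}_{i_1} & \cdots & \mathcal{B}_{i_l}\end{pmatrix}$ has rank $n-k_\mathcal{B}$ (its left null space has dimension $k_\mathcal{B}$ by definition), hence right nullity $k_\mathcal{B}+n_\mathcal{B}$, and one then argues that appending the rows $\mathcal{C}_{i_{l+1}},\dots,\mathcal{C}_{i_m}$ cuts this nullity down by $k_\mathcal{B}$, giving $rank(M)\geq n+n_\mathcal{B}-n_\mathcal{B}=n$. The obstacle you flagged is genuine, but the ``pairing'' fix you sketch does not go through as written: a left null vector $(w^T,u^T)$ of $M$ satisfies $w^T(\mathcal{A}-\lambda I)=-u^T\mathcal{D}$, where $\mathcal{D}\overset{def}{=}\begin{pmatrix}\mathcal{C}_{i_{l+1}}^T & \cdots & \mathcal{C}_{i_m}^T\end{pmatrix}^T$, so $w$ is generally \emph{not} a left eigenvector of $\mathcal{A}$ and need not lie in $K$; your claim that ``using the eigenvector relation'' forces $w^T\mathcal{C}_{i_p}=0$ is therefore unjustified, and pairing $w$ against $\eta_j$ only yields $u^T\mathcal{D}\eta_j=0$, which contradicts nothing.

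More importantly, no argument can close that gap from the hypothesis as literally stated, because the single-vector condition $\eta_j\notin Null(\mathcal{D})$ is too weak: take $n=2$, $\mathcal{A}=\lambda I_2$, $\mathcal{B}_{i_1}=0\in\mathbb{R}^{2\times 1}$ (so $k_\mathcal{B}=2$), and $\mathcal{D}=(1~~0)$; then $\eta_1=(1,0)^T$ and $\eta_2=(1,1)^T$ are linearly independent eigenvectors with $\mathcal{D}\eta_j\neq 0$, yet $rank(M)=1<2$. What the dimension count actually needs is the collective condition $span\{\eta_1,\dots,\eta_{k_\mathcal{B}}\}\cap Null(\mathcal{D})=\{0\}$, i.e.\ $\mathcal{D}$ is injective on that span: then the $k_\mathcal{B}$ independent vectors $(\eta_j^T,\mathbf{0}_{n_\mathcal{B}}^T)^T$ lie in the nullspace of the flattened matrix while their span meets $Null(M)$ trivially, so $dim(Null(M))\leq n_\mathcal{B}$ and $rank(M)\geq n$. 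Be aware that the paper's own proof makes precisely the silent leap you worried about (it passes from ``each $\mathcal{D}\eta_j\neq 0$'' to the codimension-$k_\mathcal{B}$ conclusion), so your fallback coincides with the paper's argument and inherits the same imprecision; with the hypothesis read in the span form, both your dimension count and the paper's proof close immediately.
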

\begin{proof}
	The proof is given in Appendix \ref{subsection_lemma_fixed_mode}.
\end{proof}

\begin{proof} [The proof of Theorem \ref{thm_existence_gain}]
	In this proof, using Lemma \ref{lemma_mode_stabilizability} and \ref{lemma_fixed_mode}, we show the existence of the gain parameters which stabilize the error dynamics \eqref{eq_error_dynamics02}.
	
	Let $\{i_1, \cdots, i_l\}$ and $\{i_{l+1}, \cdots, i_m\}$ be a partition of the set $\{1, \cdots, m\}$. Then by Lemma \ref{lemma_mode_stabilizability}, for the (left) eigenvector $q$ of $\mathbf{W} \otimes A$ corresponding to $\lambda \in \Lambda_U(\mathbf{W} \otimes A)$, if $q^T \begin{pmatrix} \bar{B}_{i_1} & \cdots & \bar{B}_{i_l} \end{pmatrix} = 0$ then there exists an associated (right) eigenvector, $p$, such that $p \notin Null \begin{pmatrix} \bar{C}_{i_{l+1}} \\ \vdots \\ \bar{C}_{i_{m}} \end{pmatrix}$; otherwise there exists $\lambda \in \Lambda_U(\mathbf{W} \otimes A)$ that is not contained in the controllable and observable part of $(\mathbf{W} \otimes A, \bar{B}_i, \bar{C}_i)$ for any $i \in \{1, \cdots, m\}$.
	
	Notice that this holds for any partition $\{i_1, \cdots, i_l\}$ and $\{i_{l+1}, \cdots, i_m\}$, and for any such partition, by Lemma \ref{lemma_fixed_mode}, it is true that 
	$$rank \begin{pmatrix} \mathbf{W} \otimes A - \lambda I & \bar{B}_{i_1} & \cdots & \bar{B}_{i_l} \\ \bar{C}_{i_{l+1}} & 0 & \cdots & 0 \\ \vdots & \vdots & \ddots & \vdots \\ \bar{C}_{i_{m}} & 0 & \cdots & 0 \end{pmatrix} \geq n \cdot m$$
	By the result in \cite{wang1973_tac, brasch1970_tac}, there exist $\bar{\mathbf{H}}$, $\bar{\mathbf{S}}$, $\mathbf{Q}$, and $\mathbf{R}$ that stabilize the error dynamics \eqref{eq_error_dynamics03} and satisfies
	$$\mathcal{P}(\bar{\mathbf{H}}) \leq \mathcal{P}(L), \mathcal{P}(\bar{\mathbf{S}}) \leq \mathcal{P}(L), \mathcal{P}(\mathbf{Q}) \leq \mathcal{P}(L), \mathcal{P}(\mathbf{R}) \leq \mathcal{P}(L)$$
	This proves the existence of the gain parameters for the augmented observers.
\end{proof}

\begin{remark} \label{remark_thm_proof}
	In Theorem \ref{thm_existence_gain}, we have seen that if the inequality condition $\rho(A) < \frac{\lambda_m + \lambda_2}{\lambda_m - \lambda_2}$ holds then the existence of the gain parameters follows. However as the proof of our main theorem suggests it is possible to find more general condition for the existence of a network of observers for this distributed estimation problem.
\end{remark}

\begin{remark} \label{remark_sparsity_constraint}
	We want to mention that even if the result in \cite{wang1973_tac, brasch1970_tac} gives $\bar{\mathbf{H}}, \bar{\mathbf{S}}, \mathbf{Q}, \mathbf{R}$ which satisfy the sparsity constraint, for $\mathbf{Q}$ and $\mathbf{R}$, it only holds that $\mathcal{P}(\mathbf{Q}) \leq \mathcal{P}(I_m) < \mathcal{P}(L)$ and $\mathcal{P}(\mathbf{R}) \leq \mathcal{P}(I_m) < \mathcal{P}(L)$, i.e. $\mathbf{Q}$ and $\mathbf{R}$ are block-diagonal.
\end{remark}

\section{Discussions and Future Work} \label{section_discussions}
In this paper, we study the structure of a network of observers of the form \eqref{eq_augmented_estimator} for estimating the state of a LTI system described in \eqref{state_equation}. The existence condition for such observers is characterized by the spectral radius of $A$ and the eigenvalues of the Laplacian matrix of the underlying communication graph. In particular, we show that if it holds that $\rho(A) < \frac{\lambda_m + \lambda_2} {\lambda_m - \lambda_2}$ then the state estimate of each observer asymptotically converges to the state of the LTI system. In other words, let $\hat{x}_i(k)$ and $x(k)$ be the state estimate by observer $i$ and the state of the LTI system, respectively. If the existence condition is satisfied then $||\hat{x}_i(k) - x(k)|| \rightarrow 0$ as $k \rightarrow \infty$ for all $i \in \{1, \cdots, m\}$.

However, some parts of the conditions, given and proven in this paper, may not be strict and the choice of parameters is not optimal. For example, when choosing the weight matrix $\mathbf{W}$, instead of finding $\mathbf{W}$ that minimizes its spectral radius while satisfying a sparsity constraint, we adopt $\mathbf{W} = I - \alpha L$. This choice of $\mathbf{W}$ can be improved by solving an optimization problem \cite{xiao2004_scl}. Also as mentioned in Remark \ref{remark_sparsity_constraint}, when applying the result of \cite{wang1973_tac, davison1990_tac}, the possible choice of the gain parameters is not fully utilized, especially $\mathbf{Q}$ and $\mathbf{R}$.

As future works, it may be interesting to find the parameters, which minimizes the relative effect of the noise on the estimation error. This problem is nontrivial because the sparsity constraint makes the minimization problem non-convex.

In addition, as our approach relies on a centralized computation of the parameters, designing an algorithm for distributed computation of the parameters must be an intriguing problem. This may allow individual observers to reconfigure their parameters under a switching communication graph.


\section{Appendix} \label{appendix}
\subsection{Proof of Lemma \ref{lemma_mode_stabilizability}} \label{subsection_lemma_mode_stabilizability} \label{subsection_lemma_fixed_mode}

\begin{lemma} \label{lemma_null_space_W}
	Suppose the (symmetric) weight matrix $W$ has eigenvalues inside the unit circle except at $1$ with the algebraic multiplicity one. Let $\bar{\mathbf{W}}_i \overset{def}{=}  \left ( e_i, ~  \mathbf{W} e_i, ~  \cdots, ~  \mathbf{W}^{m -1} e_i \right )^T$. Then $Null (\bar{\mathbf{W}}_i) \subseteq Null(\mathbf{1}^T)$.
\end{lemma}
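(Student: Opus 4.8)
The plan is to read $\bar{\mathbf{W}}_i$ as the observability matrix of the pair $(\mathbf{W}, e_i^T)$. Since $\mathbf{W}$ is symmetric, the columns $\mathbf{W}^j e_i$ become the rows $e_i^T \mathbf{W}^j$ after transposing, so $x \in Null(\bar{\mathbf{W}}_i)$ is precisely the statement that $e_i^T \mathbf{W}^j x = 0$ for $j = 0, \ldots, m-1$. By the Cayley–Hamilton theorem this extends to $e_i^T \mathbf{W}^j x = 0$ for every $j \ge 0$, which is the convenient form to work with.

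Next I would invoke the spectral decomposition of the real symmetric matrix $\mathbf{W}$, writing $\mathbf{W} = \sum_{l=1}^{s} \nu_l \Pi_l$, where $\nu_1 = 1, |\nu_2|, \ldots, |\nu_s| < 1$ are the distinct eigenvalues and each $\Pi_l$ is the orthogonal projector onto the $\nu_l$-eigenspace. Then $\mathbf{W}^j = \sum_l \nu_l^j \Pi_l$, so the condition above reads $\sum_{l=1}^{s} \nu_l^j \, (e_i^T \Pi_l x) = 0$ for all $j \ge 0$. Because the $\nu_l$ are pairwise distinct, the $s \times s$ Vandermonde matrix built from $\nu_1, \ldots, \nu_s$ and the powers $j = 0, \ldots, s-1$ is invertible, which forces $e_i^T \Pi_l x = 0$ for every $l$ — in particular for the simple eigenvalue $\nu_1 = 1$.

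Finally I would use that the eigenvalue $1$ has algebraic multiplicity one and that its eigenvector is $\mathbf{1}/\sqrt{m}$ (recall $\mathbf{W} = I_m - \alpha L$ and $L\mathbf{1} = 0$), so the projector $\Pi_1$ is the rank-one matrix $\mathbf{1}\mathbf{1}^T/m$. Then $0 = e_i^T \Pi_1 x = \tfrac{1}{m}(e_i^T \mathbf{1})(\mathbf{1}^T x) = \tfrac{1}{m}\,\mathbf{1}^T x$, since $e_i^T\mathbf{1} = 1$. Hence $\mathbf{1}^T x = 0$, i.e. $x \in Null(\mathbf{1}^T)$, which establishes $Null(\bar{\mathbf{W}}_i) \subseteq Null(\mathbf{1}^T)$.

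Everything here is routine linear algebra; the only points requiring a little care are (i) justifying that "algebraic multiplicity one at $1$" together with symmetry yields the rank-one spectral projector $\mathbf{1}\mathbf{1}^T/m$, and (ii) the distinct-eigenvalue Vandermonde step. An equivalent route, if one prefers to bypass Cayley–Hamilton, is to show directly that $\mathbf{1}$ lies in the cyclic (observable) subspace $\mathrm{span}\{\mathbf{W}^j e_i : j \ge 0\}$ — its component in the one-dimensional eigenspace at $1$ equals $(e_i^T \mathbf{1}/m)\,\mathbf{1} = \mathbf{1}/m \neq 0$, and a cyclic subspace of a diagonalizable operator contains the full eigenspace component of its generator — and then pass to orthogonal complements, since for symmetric $\mathbf{W}$ the orthogonal complement of the observable subspace is exactly $Null(\bar{\mathbf{W}}_i)$.
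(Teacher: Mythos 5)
Your proof is correct, but it takes a genuinely different route from the paper's. The paper begins the same way you do (symmetry plus Cayley--Hamilton give $Null(\bar{\mathbf{W}}_i) = \cap_{j\geq 1} Null(e_i^T\mathbf{W}^{j-1})$), but then it crucially uses the hypothesis that all eigenvalues other than $1$ lie strictly inside the unit circle: it argues that $\mathbf{W}^{j} \to \tfrac{1}{m}\mathbf{1}\mathbf{1}^T$, hence $\mathbf{W}^{j}e_i \to \tfrac{1}{m}\mathbf{1}$, so $\mathbf{1}$ lies in the (closed, finite-dimensional) cyclic subspace $\mathrm{span}\{\mathbf{W}^{j}e_i\}$, and then passes to orthogonal complements to get $Null(\bar{\mathbf{W}}_i)\subseteq \left[ Range(\mathbf{1})\right]^{\perp}=Null(\mathbf{1}^T)$. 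Your main argument replaces this limiting step by the spectral decomposition of the symmetric matrix $\mathbf{W}$ together with invertibility of the Vandermonde matrix on the distinct eigenvalues, which forces $e_i^T\Pi_l x=0$ for every spectral projector and in particular $\mathbf{1}^T x = 0$. What this buys: your argument never uses the stability of the remaining spectrum --- only that $1$ is a simple eigenvalue with eigenvector $\mathbf{1}$ --- so it is slightly more general and entirely algebraic, whereas the paper's argument is shorter given that it has already established convergence of $\mathbf{W}^{j}$ elsewhere; your alternative "cyclic subspace" remark at the end is in fact the same skeleton as the paper's proof with the limit replaced by the projection of $e_i$ onto the $1$-eigenspace. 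One shared caveat, not a gap relative to the paper: the lemma as stated only assumes simplicity of the eigenvalue $1$, and both you and the paper additionally import from context ($\mathbf{W}=I_m-\alpha L$, $L\mathbf{1}=0$) that the corresponding eigenvector is $\mathbf{1}/\sqrt{m}$, which is what makes $\Pi_1=\tfrac{1}{m}\mathbf{1}\mathbf{1}^T$ (and $Null(\mathbf{1}^T)$ the right target space) legitimate.
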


\begin{proof}
	By the definition of the null space and the Cayley-Hamilton theorem, $Null(\bar{\mathbf{W}}_i) = \cap_{j=1}^{m} Null \left ( e_i^T \mathbf{W}^{j-1}\right ) = \cap_{j=1}^{\infty} Null \left ( e_i^T \mathbf{W}^{j-1}\right )$.

	Note that $\mathbf{W}$ is symmetric and $1/\sqrt{m} \cdot \mathbf{1}$ is a unique (unit) eigenvector, corresponding to $\lambda_\mathbf{W} = 1$. Also $\lambda_\mathbf{W} = 1$ is the only unstable mode of $\mathbf{W}$. Thus $\mathbf{W}^{j}$ converges to $1/m \cdot \mathbf{1} \cdot \mathbf{1}^T$ as $j \to \infty$. From this fact, we can see that $\left [ \cap_{j=1}^{\infty} Null \left ( e_i^T \mathbf{W}^{j-1}\right ) \right ]^{\perp} \supseteq \cup_{j=1}^{\infty} Range (\mathbf{W}^{j-1} e_i) \supseteq Range (\mathbf{1})$.
	
	Since $\mathbb{R}^m$ is a finite dimensional vector space, it is true that $Null(\bar{\mathbf{W}}_i) = \cap_{j=1}^{\infty} Null \left ( e_i^T \mathbf{W}^{j-1}\right ) \subseteq \left [ \cup_{j=1}^{\infty} Range \left ( \mathbf{W}^{j-1} e_i \right ) \right ]^{\perp} \subseteq \left [ Range (\mathbf{1})\right ]^{\perp} = Null \left ( \mathbf{1}^T \right )$. This proves our claim.
\end{proof}

\begin{proof} [Proof of Lemma \ref{lemma_mode_stabilizability}]
	Under the choice of $\mathbf{W} = I_m - \alpha L$ with $\alpha \in \left ( \lambda_2^{-1} \left ( 1 - \rho^{-1}(A) \right ), \lambda_m^{-1} \left ( 1 + \rho^{-1}(A) \right ) \right )$, it is true that only unstable modes of $W \otimes A$ are that of $A$, where the relation between their corresponding eigenvectors is explicitly shown in Lemma \ref{lemma_generalized_eigenvector}. By the property of the Kronecker product and Lemma \ref{lemma_generalized_eigenvector}, it is straightforward to see that the number of unstable modes of $\mathbf{W} \otimes A$ coincides with that of $A$, i.e. $g_{\mathbf{W} \otimes A}(\lambda) = g_{A} (\lambda)$ for $\lambda \in \Lambda_U(\mathbf{W} \otimes A)$.

	In this proof, we show that, by applying the Kalman decomposition method, the observable part of $(A, C_i)$ is contained in the controllable and observable part of $(\mathbf{W} \otimes A, \bar{B}_i, \bar{C}_i)$. Then, since $(A, C)$ is detectable, this proves our claim.

	For notational convenience, we define the following notation.
	\begin{itemize}
		\item Given $\{a_i\}_{i \in \{1, \cdots, n\}}$ and $\{j_1, \cdots, j_l\} \subseteq \{1, \cdots, n\}$, we define $a_{\{j_1, \cdots, j_l\}} \overset{def}{=} \left\{ a_{j_1}, \cdots, a_{j_l} \right\}$.
	\end{itemize}
	
	Let $k_i = dim \left ( Null(\mathcal{O}_i) \right )$, where $\mathcal{O}_i$ is the observability matrix of $(A, C_i)$. Then we can construct a $n \times n$ nonsingular matrix $P$ as follows.
	\[
		P = \left ( p_{\{1, \cdots, k_i\}}, p_{\{k_i+1, \cdots, n\}} \right )
	\]
	where $p_{\{1, \cdots, k_i\}} \subseteq Null(\mathcal{O}_i)$ and $p_{\{k_i+1, \cdots, n\}} \subseteq Range(\mathcal{O}^T_i)$. Note that we may choose column vectors in $P$ to be orthogonal to each other.

	Then by the Kalman decomposition, we obtain
	\begin{equation} \label{eq_kalman_decomp}
	\begin{split}
		\hat{A}^{(i)} &= P^{-1} A P = \begin{pmatrix}
								A_{NO}^{(i)} & A^{(i)}_{2} \\
								0 & A_{O}^{(i)}
							\end{pmatrix} \\
		\hat{C}^{(i)} &= C_iP = \left ( 0 ~ C_{O}^{(i)}\right )
	\end{split}
	\end{equation}
where $\left( A_O^{(i)}, C_O^{(i)}\right)$ is an observable part.

	Similarly, let $\bar{\mathcal{O}}_i$ be the observability matrix of $(\mathbf{W} \otimes A, \bar{B}_i, \bar{C}_i)$ and $l_i = dim \left ( Null \left ( \left ( e_i, \mathbf{W} e_i, \cdots, \mathbf{W}^{m -1} e_i \right ) ^T \right ) \right )$. Choose $\left ( \beta_{\{1, \cdots, l_i\}}, \beta_{\{l_i+1, \cdots, m\}} \right )$, where $\beta_{\{1, \cdots, l_i\}} \subseteq Null \left ( \left ( e_i, \mathbf{W} e_i, \cdots, \mathbf{W}^{m -1} e_i \right ) ^T \right )$, $\beta_{\{l_i+1, \cdots, m-1\}} \subseteq Null(\mathbf{1}^T) \cap Range \big ( e_i, \mathbf{W} e_i, \cdots, \mathbf{W}^{m -1} e_i \big )$, and $\beta_m = 1/\sqrt{m} \cdot \mathbf{1}$ \footnote{From Lemma \ref{lemma_null_space_W}, we can infer that $\beta_m = 1/\sqrt{m} \cdot \mathbf{1} \in Range \left( e_i, \mathbf{W}e_i, \cdots, \mathbf{W}^{m-1}e_i \right)$}. By Lemma \ref{lemma_null_space_W} and the structure of $\left ( e_i, \mathbf{W} e_i, \cdots, \mathbf{W}^{m -1} e_i \right ) ^T$, we may choose $\left ( \beta_{\{1, \cdots, l_i\}}, \beta_{\{l_i+1, \cdots, m\}} \right )$ to be nonsingular, and $\beta_m$ to be orthogonal to $\beta_j$ for $j \in \{1, \cdots, m-1\}$. Then we can construct a matrix $P_\mathbf{W}$ as
	\begin{equation*}
	\begin{split}
		P_\mathbf{W} = \big \{ \beta_{\{1, \cdots, l_i\}} \otimes P, \beta_{\{l_i+1, \cdots, m-1\}} \otimes P_1, \\ \eta_{\{1, \cdots, t\}}, \beta_m \otimes P_1, \xi_{\{1, \cdots, s\}}, \beta_m \otimes P_2 \big \}
	\end{split}
	\end{equation*}
	where $P_1 = p_{\{1, \cdots, k_i\}}$ and $P_2 = p_{\{k_i+1, \cdots, n\}}$.
	Here $\eta_{\{1, \cdots, t\}}$ and $\xi_{\{1, \cdots, s\}}$ are chosen properly so that $span \big \{ \beta_{\{1, \cdots, l_i\}} \otimes P, \beta_{\{l_i+1, \cdots, m-1\}} \otimes P_1, \eta_{\{1, \cdots, t\}}, \beta_m \otimes P_1 \big \} = Null (\bar{\mathcal{O}_i})$ and $P_\mathbf{W}$ is nonsingular. In particular, by our choice of $\beta_{\{1, \cdots, m\}}$ and $P$, each $\eta_j$ can be written as a linear combination of column vectors in $\beta_{\{l_i+1, \cdots, m\}} \otimes P_2$ for $j \in \{1, \cdots, t\}$. Similarly each $\xi_j$ can be written as a linear combination of column vectors in $\beta_{\{l_i+1, \cdots, m-1\}}\otimes P_2$ for $j \in \{1, \cdots, s\}$ \footnote{Since we already have $\beta_m \otimes P_2$ in $P_W$ and $span \left( \beta_m \otimes P_2 \right) \cap Null \left( \bar{\mathcal{O}}_i\right) = \emptyset$, $\xi_j$ is represented by columns in $\beta_{\{l_i+1, \cdots, m-1\}} \otimes P_2$.}.
	
	Using $P_\mathbf{W}$, we can describe the observable part of $(\mathbf{W} \otimes A, \bar{B_i}, \bar{C_i})$ as follows.
	\begin{equation} \label{eq_kalman_decomp_W}
		\begin{split}
			\hat{\bar{A}}^{(i)} = P_\mathbf{W}^{-1} (\mathbf{W} \otimes A) P_\mathbf{W} = \begin{pmatrix} \bar{A}_{NO}^{(i)} & \bar{A}_{2}^{(i)} \\ 0 & \bar{A}_O^{(i)} \end{pmatrix} \\
			\hat{\bar{C}}^{(i)} = (e_i^T \otimes C_i) P_\mathbf{W} = \left ( 0 ~ \bar{C}_{O}^{(i)} \right )
		\end{split}
	\end{equation}
	where $\left ( \bar{A}_O^{(i)}, \bar{C}_O^{(i)} \right )$ is an observable pair.
	
	Let $\bar{A}_{2}^{(i)} = \begin{pmatrix} \bar{A}_{2,1}^{(i)} & \bar{A}_{2,2}^{(i)} \\ \bar{A}_{2,3}^{(i)} & \bar{A}_{2,4}^{(i)} \end{pmatrix}$ and $\bar{A}_{O}^{(i)} = \begin{pmatrix} \bar{A}_{O,1}^{(i)} & \bar{A}_{O,2}^{(i)} \\ \bar{A}_{O,3}^{(i)} & \bar{A}_{O,4}^{(i)} \end{pmatrix}$. Then we obtain from \eqref{eq_kalman_decomp_W},
	\begin{equation*}
		\begin{split}
			&(\mathbf{W} \otimes A) \left ( \beta_m \otimes P_2 \right ) = \beta_m \otimes A P_2\\
			&= P_\mathbf{W} \begin{pmatrix} \bar{A}_{2,2}^{(i)} \\ \bar{A}_{2,4}^{(i)} \\ \bar{A}_{O,2}^{(i)} \\ \bar{A}_{O,4}^{(i)} \end{pmatrix} \\
			&= \left ( \beta_{\{1, \cdots, l_i\}} \otimes P, \beta_{\{l_i+1, \cdots, m-1\}} \otimes P_1, \eta_{\{1, \cdots, t\}} \right ) \bar{A}_{2,2}^{(i)} \\ & \qquad + (\beta_m \otimes P_1) \bar{A}_{2,4}^{(i)} + \xi_{\{1, \cdots, s\}} \bar{A}_{O,2}^{(i)} + ( \beta_m \otimes P_2 ) \bar{A}_{O,4}^{(i)} \\
		\end{split}
	\end{equation*}

	The first equality comes from the fact that $\mathbf{W} \beta_m = \beta_m$. Since $\hat{\bar{A}}^{(i)}$ is uniquely determined by $\mathbf{W} \otimes A$ and $P_\mathbf{W}$, we obtain $\bar{A}_{2,2}^{(i)} = \bar{A}_{O,2}^{(i)} = 0$ and $\bar{A}_{2,4}^{(i)} = A_{2}^{(i)}$, $\bar{A}_{O,4}^{(i)} = A_{O}^{(i)}$, where $A_{2}^{(i)}$ and $A_O^{(i)}$ are defined in \eqref{eq_kalman_decomp}. In addition we see that $\bar{C}_O^{(i)} = \left ( (e_i^T \otimes C_i) \xi_{\{1, \cdots, s\}},  (1/\sqrt{m})C_O^{(i)} \right )$. Hence the observable part of $(A, C)$ is contained in the observable part of $(\mathbf{W} \otimes A, \bar{B_i}, \bar{C_i})$.
	
	To show that modes of $A_O^{(i)}$ in $\hat{\bar{A}}^{(i)}$ are controllable in $(\mathbf{W} \otimes A, \bar{B}_i, \bar{C}_i)$, first notice that $(\beta_m \otimes q)^T P_\mathbf{W} = \big ( 0, \cdots, 0,  (\beta_m \otimes q)^T \eta_{\{1, \cdots, t\}}, q^T P_1, 0, \cdots, 0,  q^T P_2\big )$ where $q$ is an (left) eigenvector of $A$. From our construction of $q, P_1, P_2$, we know that $(q^T P_1, ~ q^T P_2)$ is a left eigenvector of $\hat{A}^{(i)}$.
	Thus every left eigenvector of $A_O^{(i)}$ in $\hat{\bar{A}}^{(i)}$ is identified with $\beta_m \otimes q$ for some eigenvector $q$ of $A$. It is straightforward to see that $(\beta_m \otimes q)^T \bar{B_i}$ is nonzero for all $i \in \{1, \cdots, m\}$ and eigenvector $q$. Therefore, every mode of $A_O^{(i)}$ in $\hat{\bar{A}}^{(i)}$ is controllable, and we conclude that the observable part of $(A, C_i)$ is contained in the controllable and observable part of $(\mathbf{W} \otimes A, \bar{B_i}, \bar{C_i})$.
\end{proof}

\subsection{Proof of Lemma \ref{lemma_fixed_mode}} \label{subsection_lemma_fixed_mode}

\begin{proof} [Proof of Lemma \ref{lemma_fixed_mode}]
Let $n_\mathcal{B}$ be the number of columns in $\begin{pmatrix} \mathcal{B}_{i_1} & \cdots & \mathcal{B}_{i_l} \end{pmatrix}$.
	Note that 
	\begin{equation*}
		\begin{split}
			dim \left( Null \begin{pmatrix} \mathcal{A} - \lambda I & \mathcal{B}_{i_1} & \cdots & \mathcal{B}_{i_l}  \end{pmatrix} \right) &= n + n_\mathcal{B} - rank \begin{pmatrix} \mathcal{A} - \lambda I & \mathcal{B}_{i_1} & \cdots & \mathcal{B}_{i_l} \end{pmatrix} \\
			&= n + n_\mathcal{B} - (n - k_\mathcal{B}) = k_\mathcal{B} + n_\mathcal{B}
		\end{split}
	\end{equation*}
	
	Let $\mathbf{0}_{n_\mathcal{B}}$ be a $n_\mathcal{B}$-dimensional zero vector. Since there exist $k_\mathcal{B}$ linearly independent vectors $\left\{ \begin{pmatrix} \eta_j \\ \mathbf{0}_{n_\mathcal{B}} \end{pmatrix} \right\}_{j \in \{1, \cdots, k_\mathcal{B}\}}$ such that $\begin{pmatrix} \eta_j \\ \mathbf{0}_{n_\mathcal{B}} \end{pmatrix} \in Null \begin{pmatrix} \mathcal{A} - \lambda I & \mathcal{B}_{i_1} & \cdots & \mathcal{B}_{i_l}  \end{pmatrix}$ but $\begin{pmatrix} \mathcal{C}_{i_{l+1}} \\ \vdots \\ \mathcal{C}_{i_{m}} \end{pmatrix} \eta_j$ is nonzero for all $j \in \{1, \cdots, k_\mathcal{B}\}$, it holds that
	\begin{equation*}
		\begin{split}
			dim \left( Null \begin{pmatrix} \mathcal{A} - \lambda I & \mathcal{B}_{i_1} & \cdots & \mathcal{B}_{i_l} \\ \mathcal{C}_{i_{l+1}} & 0 & \cdots & 0 \\ \vdots & \vdots & \ddots & \vdots \\ \mathcal{C}_{i_{m}} & 0 & \cdots & 0 \end{pmatrix} \right) \leq k_\mathcal{B} + n_\mathcal{B} - k_\mathcal{B} = n_\mathcal{B}
		\end{split}
	\end{equation*}
	
	Therefore, we obtain
	\begin{equation*}
		rank \begin{pmatrix} \mathcal{A} - \lambda I & \mathcal{B}_{i_1} & \cdots & \mathcal{B}_{i_l} \\ \mathcal{C}_{i_{l+1}} & 0 & \cdots & 0 \\ \vdots & \vdots & \ddots & \vdots \\ \mathcal{C}_{i_{m}} & 0 & \cdots & 0 \end{pmatrix} = n + n_\mathcal{B} - dim \left( Null \begin{pmatrix} \mathcal{A} - \lambda I & \mathcal{B}_{i_1} & \cdots & \mathcal{B}_{i_l} \\ \mathcal{C}_{i_{l+1}} & 0 & \cdots & 0 \\ \vdots & \vdots & \ddots & \vdots \\ \mathcal{C}_{i_{m}} & 0 & \cdots & 0 \end{pmatrix} \right) \geq n
	\end{equation*}
\end{proof}


\bibliographystyle{IEEEtran}
\bibliography{IEEEabrv,spark2012_ACC}

\end{document}